\tikzset{
every node/.style={draw, circle, inner sep=2pt}
}
\newtheorem{theorem}{Theorem}[section]
\newtheorem{lemma}[theorem]{Lemma}
\newtheorem{proposition}[theorem]{Proposition}
\newtheorem{corollary}[theorem]{Corollary}
\theoremstyle{definition}
\newtheorem{definition}[theorem]{Definition}
\newtheorem{remark}[theorem]{Remark}
\newtheorem{example}[theorem]{Example}
\newcommand{\bzero}{{\bf 0}}
\newcommand{\trans}{^\top}
\newcommand{\Col}{\mathrm{Col}}
\newcommand{\dunion}{\mathbin{\dot\cup}}
\newcommand{\basisE}{{\mathcal E}}
\newcommand{\basisK}{{\mathcal K}}
\newcommand{\basisX}{{\mathcal X}}
\newcommand{\rank}{\operatorname{rank}}
\newcommand{\vect}{\operatorname{vec}_\triangle}
\newcommand{\vecw}{\operatorname{vec}_\wedge}
\newcommand{\vecs}{\operatorname{vec}_\square}
\newcommand{\supp}{\operatorname{supp}}
\newcommand{\mptn}{\mathcal{S}}
\newcommand{\spec}{\operatorname{spec}}
\newcommand{\mptncl}{\mathcal{S}^{\rm cl}}
\newcommand{\mptnclo}{\mathcal{S}^{\rm cl}_0}
\newcommand{\verS}{\Psi_{\rm SSP}}
\newcommand{\verA}{\Psi_{\rm SAP}}
\newcommand{\mult}{\mathrm{mult}}
\newcommand{\cart}{\mathbin{\square}}
\newcommand{\vspan}{\operatorname{span}}
\newcommand{\tr}{\operatorname{tr}}
\newcommand{\mat}[1][n]{\operatorname{Mat}_{#1}(\mathbb{R})}
\newcommand{\msym}[1][n]{\operatorname{Sym}_{#1}(\mathbb{R})}
\newcommand{\mskew}[1][n]{\operatorname{Skew}_{#1}(\mathbb{R})}
\newcommand{\bx}{{\bf x}}
\newcommand{\bv}{{\bf v}}
\newcommand{\bu}{{\bf u}}
\newcommand{\ba}{{\bf a}}
\newcommand{\bb}{{\bf b}}
\newcommand{\be}{{\bf e}}
\newcommand{\bff}{{\bf f}}
\newcommand{\bs}{{\bf s}}
\newcommand\scalemath[2]{\scalebox{#1}{\mbox{\ensuremath{\displaystyle #2}}}}
\title{The liberation set in the inverse eigenvalue problem of a graph
}
\author{
Jephian C.-H.~Lin
\thanks{Department of Applied Mathematics, National Sun Yat-sen University, Kaohsiung 80424, Taiwan (jephianlin@gmail.com)}
\and
Polona Oblak
\thanks{Faculty of Computer and Information Science, University of Ljubljana, Ve\v cna pot 113, SI-1000 Ljubljana, Slovenia; Faculty of Mathematics and Physics, University of Ljubljana and Institute of Mathematics, Physics, and Mechanics, Jadranska ulica 19, 1000 Ljubljana, Slovenia (polona.oblak@fri.uni-lj.si)}
\and 
Helena \v{S}migoc
\thanks{School of Mathematics and Statistics, University College Dublin, Belfield, Dublin 4, Ireland (helena.smigoc@ucd.ie)}
}
\date{\today}
\begin{document}

\maketitle

\begin{abstract}
The inverse eigenvalue problem of a graph $G$ is the problem of characterizing all lists of eigenvalues of real symmetric matrices whose off-diagonal pattern is prescribed by the adjacencies of $G$. The strong spectral property is a powerful tool in this problem, which identifies matrices whose entries can be perturbed while controlling the pattern and preserving the eigenvalues. The Matrix Liberation Lemma introduced by Barrett et al.~in 2020 advances the notion to a more general setting. In this paper we revisit the Matrix Liberation Lemma and prove an equivalent statement, that reduces some of the technical difficulties in applying the result. 

We test our method on matrices of the form $M=A \oplus B$ and show how this new approach supplements the results that can be obtained from the strong spectral property only. 
While extending this notion to the direct sums of graphs, we discover a surprising connection with the zero forcing game on Cartesian products of graphs. 

Throughout the paper we apply our results to resolve a selection of open cases for the inverse eigenvalue problem of a graph on six vertices.
\end{abstract}  

\noindent{\bf Keywords:} Symmetric matrix; Inverse eigenvalue problem; Strong spectral property; Matrix Liberation Lemma; Zero forcing.

\medskip

\noindent{\bf AMS subject classifications:}
05C50, 
15A18, 
15B57, 
65F18. 

\section{Introduction}
\label{sec:intro}

Let $G$ be a simple graph on $n$ vertices. Let the set $\mptn(G)$ denote the set of $n\times n$ real symmetric matrices whose $(i,j)$-entry, $i\neq j$, is nonzero if and only if $\{i,j\}$ is an edge of $G$, there are no constrains on diagonal entries.  The \emph{inverse eigenvalue problem of a graph} (the IEP-$G$, for short) is to find all possible spectra $\spec(A)$ among matrices $A\in\mptn(G)$.  

Despite huge interest and extensive literature on the problem, the IEP-$G$  has been solved only for a few selected families of graphs that include paths~\cite{MR382314, MR447279, MR447294}, cycles~\cite{MR583498}, generalized stars~\cite{MR2022294}, complete graphs \cite{MR3118942}, lollipop and barbell graphs~\cite{MR4316738}, linear trees \cite{MR4357320}, and graphs with at most five vertices~\cite{MR3291662, MR4074182}. For the background to the problem we refer the reader to~\cite{IEPGZF22}.

The study of the IEP-$G$ involves finding matrices in $\mptn(G)$ with prescribed eigenvalues, or proving that such matrices do not exist. Since those tasks are hard once matrices get large and patterns more complicated, the research focuses on finding ways of extracting information for more difficult cases from simpler or smaller examples. An important advance in this direction was made in  \cite{MR3665573, MR4074182} with the introduction of the strong properties. In particular, if a matrix $A \in \mptn(G)$ has the strong spectral property 
defined below, then we can, for any spanning supergraph $H$ of $G$, infer the existence of a matrix $A' \in \mptn(H)$ that is cospectral with $A$.
The Matrix Liberation Lemma \cite{MR4074182} (see also Lemma~\ref{lemma:liberation}) provides a theoretical foundation for an investigation into when and to what extent similar conclusions can be drawn for matrices that do not have the strong spectral property. A more rigorous background on the strong properties leading to the Matrix Liberation Lemma is detailed in Subsection~\ref{subsec:preliminaries}.

Applying the Matrix Liberation Lemma directly involves first constructing a verification matrix $\Psi$, and then finding a vector $\bf x$ with certain properties in the column space of $\Psi$. Both tasks can be technically demanding, and make it hard to develop an intuition into the process. With the aim of untangling some of the technical difficulties of applying the lemma, we introduce the liberation set of a matrix in Section~\ref{sec:libermtx}. With this notion we are able to form a result that is equivalent to the Matrix Liberation Lemma, but easier to apply.

With this new insight we are able to offer a series of examples that highlight how our result can be used to advance the IEP-$G$. In particular, in Section \ref{sec:directsum} we apply the method to direct sums of matrices. The eigenvalues of $M=A \oplus B$ are straightforward to determine from the eigenvalues of $A$ and $B$. Our method allows us to perturb the matrix $M$ so that the new matrix corresponds to different connected graphs, while preserving the spectrum; see Theorems~\ref{thm:multi-multi-eigenvalue-in-common} and~\ref{thm:multi-1-eigenvalue-in-common}. The strong spectral property already allows this, but only in the case when $A$ and $B$ have no eigenvalues in common.

 Surprisingly, the zero forcing process introduced in \cite{MR2388646} plays an important role in determining the liberation set of the directed sum of matrices with the strong spectral property; see Section~\ref{sec:ZF}.
 In Section~\ref{sec:libergraph} we continue the direction of research initiated in \cite{MR4080669} and define the liberation set of a graph $G$, which is independent of the choice of $A\in\mptn(G)$. 

Throughout the paper we offer a myriad of examples on how our theory can be applied. In particular, in~\cite{MR4284782} a huge advance towards resolving the IEP-$G$ for graphs on six vertices was made, with only spectral arbitrariness of selected multiplicities left to be resolved. We are able to resolve some of those open cases, see Table~\ref{table:6-vertices}. 

\newlength{\Gheight}
\newlength{\struct}
\settoheight{\Gheight}{G}
\setlength{\struct}{0.5\Gheight}
\addtolength{\struct}{-0.6cm}
\begin{table}
\centering
\begin{tabular}{|c|c|l|l|}
\hline
No. & Graph & Ordered multiplicity lists & Reference \\ 
\hline
$\mathsf{G_{100}}$ &
\raisebox{\struct}{\begin{tikzpicture}[scale=0.5, every node/.append style={inner sep=1pt}]
\draw[opacity=0] (0,-1.2) -- (0,1.2);
\foreach \i in {1,...,6} {
    \pgfmathsetmacro{\angle}{60 * (\i - 1)}
    \node (\i) at (\angle:1) {};
}
\draw (3) -- (1) -- (2);
\draw (4) -- (1);
\draw (5) -- (6);
\draw (5) -- (4) -- (6);
\end{tikzpicture}} &
$(1,2,2,1)$ &
Example~\ref{ex:twotrees} \\
\hline 
$\mathsf{G_{127}}$ &
\raisebox{\struct}{\begin{tikzpicture}[scale=0.5, every node/.append style={inner sep=1pt}]
\draw[opacity=0] (0,-1.2) -- (0,1.2);
\foreach \i in {1,...,6} {
    \pgfmathsetmacro{\angle}{60 + 60 * (\i - 1)}
    \node (\i) at (\angle:1) {};
}
\draw (1) -- (2) -- (3) -- (1);
\draw (4) -- (5) -- (6);
\draw (1) -- (6);
\draw (3) -- (4);
\end{tikzpicture}} &
$(2,1,1,2)$ &
Example~\ref{ex:g127g169} \\
\hline 
$\mathsf{G_{129}}$ &
\raisebox{\struct}{\begin{tikzpicture}[scale=0.5, every node/.append style={inner sep=1pt}]
\draw[opacity=0] (0,-1.2) -- (0,1.2);
\foreach \i in {1,4,6} {
    \pgfmathsetmacro{\angle}{60 * (\i -2)}
    \node (\i) at (\angle:1) {};
 }
\foreach \i in {2,3} {
    \pgfmathsetmacro{\angle}{60 * (\i -2)}
    \node (\i) at (\angle:1) {};
 }
\node (5) at (180:1) {};
\draw (1) -- (2) -- (3) -- (4);
\draw (2) -- (6);
\draw (4) -- (5) -- (6);
\draw (1) -- (5);
\end{tikzpicture}} &
$(1,1,3,1)$, $(1,3,1,1)$ &
Example~\ref{ex:g129-g145-g153} \\
\hline 
$\mathsf{G_{145}}$ &
\raisebox{\struct}{\begin{tikzpicture}[scale=0.5, every node/.append style={inner sep=1pt}]
\draw[opacity=0] (0,-1.2) -- (0,1.2);
\foreach \i in {1,4,6} {
    \pgfmathsetmacro{\angle}{60 * (\i -2)}
    \node (\i) at (\angle:1) {};
 }
\foreach \i in {2,3} {
    \pgfmathsetmacro{\angle}{60 * (\i -2)}
    \node (\i) at (\angle:1) {};
 }
\node (5) at (180:1) {};
\draw (1) -- (2) -- (3) -- (4);
\draw (2) -- (6);
\draw (4) -- (5) -- (6);
\draw (1) -- (5)--(2);
\end{tikzpicture}} &
$(1,1,3,1)$, $(1,3,1,1)$ &
Example~\ref{ex:g129-g145-g153} \\
\hline 
$\mathsf{G_{151}}$ &
\raisebox{\struct}{\begin{tikzpicture}[scale=0.5, every node/.append style={inner sep=1pt}]
\draw[opacity=0] (0,-1.2) -- (0,1.2);
\foreach \i in {1,2} {
    \pgfmathsetmacro{\angle}{60 * (\i)}
    \node (\i) at (\angle:1) {};
 }
\node (3) at (0:1) {};
\node (4) at (-60:1) {};
\node (5) at (-120:1) {};
\node (6) at (180:1) {};
\draw (3)--(1) -- (2);
\draw (1) -- (4);
\draw (5) -- (6);
\draw (3) -- (5) -- (4);
\draw (2) -- (6) -- (4);
\end{tikzpicture}} &
\makecell[cl]{$(1,1,3,1)$, $(1,3,1,1)$, $(1,2,3)$, $(3,2,1)$,\\ 
$(1,3,2)$, $(2,3,1)$} &
Example~\ref{ex:g151} \\
\hline 
$\mathsf{G_{153}}$ &
\raisebox{\struct}{\begin{tikzpicture}[scale=0.5, every node/.append style={inner sep=1pt}]
\draw[opacity=0] (0,-1.2) -- (0,1.2);
\foreach \i in {1,4,6} {
    \pgfmathsetmacro{\angle}{60 * (\i -2)}
    \node (\i) at (\angle:1) {};
 }
\foreach \i in {2,3} {
    \pgfmathsetmacro{\angle}{60 * (\i -2)}
    \node (\i) at (\angle:1) {};
 }
\node (5) at (180:1) {};
\draw (6) -- (1) -- (2) -- (3) -- (4);
\draw (2) -- (6);
\draw (4) -- (5) -- (6);
\draw (1) -- (5);
\end{tikzpicture}} &
$(1,1,3,1)$, $(1,3,1,1)$ &
Example~\ref{ex:g129-g145-g153} \\
\hline 
$\mathsf{G_{163}}$ &
\raisebox{\struct}{\begin{tikzpicture}[scale=0.5, every node/.append style={inner sep=1pt}]
\draw[opacity=0] (0,-1.2) -- (0,1.2);
\foreach \i in {1,...,6} {
    \pgfmathsetmacro{\angle}{60 * (\i - 1)}
    \node (\i) at (\angle:1) {};
 }
\draw (1) -- (2) -- (3) -- (1);
\draw (4) -- (5) -- (6) -- (1) -- (5) -- (3) -- (4);
\end{tikzpicture}} &
$(1,1,3,1)$, $(1,3,1,1)$ &
Example~\ref{ex:g163} \\
\hline 
$\mathsf{G_{169}}$ &
\raisebox{\struct}{\begin{tikzpicture}[scale=0.5, every node/.append style={inner sep=1pt}]
\draw[opacity=0] (0,-1.2) -- (0,1.2);
\foreach \i in {1,...,6} {
    \pgfmathsetmacro{\angle}{-60 + 60 * (\i - 1)}
    \node (\i) at (\angle:1) {};
}
\draw (3) -- (1) -- (2) -- (3) -- (4) -- (1);
\draw  (4) -- (5) -- (6) -- (2) -- (4);
\end{tikzpicture}} &
$(1,3,2)$, $(2,3,1)$ &
Example~\ref{ex:g127g169} \\
\hline 
$\mathsf{G_{171}}$ &
\raisebox{\struct}{\begin{tikzpicture}[scale=0.5, every node/.append style={inner sep=1pt}]
\draw[opacity=0] (0,-1.2) -- (0,1.2);
\foreach \i in {1,3,4,5} {
    \pgfmathsetmacro{\angle}{60 * (\i -1)}
    \node (\i) at (\angle:1) {};
 }
\node (2) at (60:1) {};
\node (6) at (300:1) {};
\draw (1) -- (2) -- (3) -- (4)--(5)--(1);
\draw (1) -- (6) -- (3);
\draw (4) -- (6)--(5);
\end{tikzpicture}} &
\makecell[cl]{$(1,1,3,1)$, $(1,3,1,1)$, $(1,2,3)$, $(3,2,1)$, \\
$(1,3,2)$, $(2,3,1)$} &
Example~\ref{ex:g171-g187} \\
\hline 
$\mathsf{G_{175}}$ &
\raisebox{\struct}{\begin{tikzpicture}[scale=0.5, every node/.append style={inner sep=1pt}]
\draw[opacity=0] (0,-1.2) -- (0,1.2);
\foreach \i in {1,3,5} {
    \pgfmathsetmacro{\angle}{60 * (\i -1)}
    \node (\i) at (\angle:1) {};
 }
\foreach \i in {2,4} {
    \pgfmathsetmacro{\angle}{60 * (\i -1)}
    \node (\i) at (\angle:1) {};
 }
\node (6) at (300:1) {};
\draw (1) -- (6) -- (3);
\draw (5) -- (6);
\draw (5) -- (4) -- (1);
\draw (5) -- (2) -- (1);
\draw (4) -- (3) -- (2);
\end{tikzpicture}} &
$(1,3,2)$, $(2,3,1)$ &
Example~\ref{ex:g175} \\
\hline 
$\mathsf{G_{187}}$ &
\raisebox{\struct}{\begin{tikzpicture}[scale=0.5, every node/.append style={inner sep=1pt}]
\draw[opacity=0] (0,-1.2) -- (0,1.2);
\foreach \i in {1,2,3,4,5} {
    \pgfmathsetmacro{\angle}{60 * (\i -1)}
    \node (\i) at (\angle:1) {};
 }
\node (6) at (300:1) {};
\draw (1) -- (2) -- (3) -- (4)--(5)--(1);
\draw (1) -- (6) -- (3);
\draw (4) -- (6)--(5);
\draw (6)--(2);
\end{tikzpicture}} &
\makecell[cl]{$(1,1,3,1)$, $(1,3,1,1)$, $(1,2,3)$, $(3,2,1)$, \\
$(1,3,2)$, $(2,3,1)$} &
Example~\ref{ex:g171-g187} \\
\hline 
\end{tabular}
    \caption{ 
The table of realizable ordered multiplicity lists for graphs on six vertices,~\cite[Appendix B]{MR4284782},  that we prove are spectrally arbitrary in this paper. Graphs are labeled according to the labeling in Atlas of Graphs~\cite{MR1692656}.
    \label{table:6-vertices}
    }
\end{table}

\subsection{Notation and terminology}

As the topic of this work spans matrix and graph theory, we depend on some standard notation from both areas. 

For spaces of matrices, $\mat[m,n]$ denotes the space of all $m\times n$ matrices over $\mathbb{R}$ (where $m = n$ case is abbreviated as $\mat$),  $\msym$ denotes the space of all symmetric matrices of order $n$ over $\mathbb{R}$, and $\mskew$ be the space of all skew-symmetric matrices of order $n$ over $\mathbb{R}$.  

While $\mptn(G)$ is not a subspace of $\msym$, the topological closure of $\mptn(G)$ denoted by  $\mptncl(G)$ is.  That is, $\mptncl(G)$ is the set of matrices whose $(i,j)$-entry is nonzero only when $i=j$ or $\{i,j\}$ is an edge of $G$, and it is a subspace of $\msym$ of dimension $|V(G)| + |E(G)|$.  The set $\mptnclo(G)$ of matrices in $\mptncl(G)$ with zero diagonal is another related subspace, this one of dimension $|E(G)|$.  

For any positive integers $n$ and $k$, let  $[n]:=\{1,2,\dots,n\}$ and $k + [n] := \{k + 1, \ldots, k + n\}$. For any given vector $\bv=\begin{pmatrix} v_i\end{pmatrix} \in \mathbb{R}^n$, we define its support by $\supp(\bv):=\left\{i\colon v_{i} \ne 0\right\}$, which is a subset of $[n]$. 
The column space of  an $m\times n$ matrix $M$ is denoted by 
$\mathrm{Col}(M):=\{M\bx\colon \bx \in \mathbb{R}^n\}$ and its kernel (or null space) by $\ker(M):=\{\bx \in \mathbb{R}^n\colon M \bx=\bf{0}\}$.  For $\alpha \subseteq [m]$ and $\beta \subseteq [n]$, let $M[\alpha,\beta]$ be the submatrix of $M$ induced on rows in $\alpha$ and columns in $\beta$.  Either $\alpha$ or $\beta$ can be replaced by the symbol $:$ as an indication of the whole index set of rows or columns.  When we write $\|M\|$, this can represent any matrix norm; however for concreteness we can take $\|M\| := \sqrt{\tr(M\trans M)}$ throughout.

Let $A$ and $B$ be matrices.  Their direct sum is denoted by $A\oplus B$, $A \circ B$ is the Hadamard (entrywise) product of $A$ and $B$, and $[A,B] := AB - BA$ is the commutator, where in each case we assume $A$ and $B$ are of sizes compatible with the relevant operation.

Let $E^{i,j} \in \mat[m,n]$ be the matrix with one at position $(i,j)$ and zeros elsewhere, $I_n$ the $n \times n$ identity matrix, and $O_{m,n}$ the $m \times n$ zero matrix. In all cases the indices will be omitted if they are clear from the context. Moreover, let $K^{i,j}:=E^{i,j}-E^{j,i}$ and $X^{i,j} := E^{i,j} + E^{j,i}$. 

The \emph{lexicographical order} on $\mathbb{Z}\times\mathbb{Z}$ is defined by $(i,j)\preceq (k,\ell)$ if and only if $i<k$ or ($i=k$ and $j\leq \ell$).  For  $A\in \mat[m,n]$ let $\vect(A)$ be the vector in $\mathbb{R}^{\binom{n+1}{2}}$ that records the entries in the upper triangular part of $A$ under the lexicographical order.  Similarly, $\vecw(A)$ is the vector in $\mathbb{R}^{\binom{n}{2}}$ that records the strictly upper triangular part of $A$, excluding the diagonal entries.  Finally, $\vecs(A)$ is the vector in $\mathbb{R}^{n^2}$ that records all entries of $A$ under the lexicographical order.

For a symmetric matrix $A$ we denote 
the multiplicity of an eigenvalue $\lambda$ of $A$ as  $\mult_A(\lambda)$. Suppose that $A\in \msym$ has distinct eigenvalues $\lambda_1, \ldots,\lambda_{q}$ with multiplicities $m_i=\mult_A(\lambda_i)$ for $i\in[q]$. The spectrum of $A$ will be denoted by $\spec(A)=\{\lambda_1^{(m_1)},\ldots,\lambda_q^{(m_q)}\}$, where $\lambda^{(k)}$ denotes $k$ copies of $\lambda$. The \emph{unordered multiplicity list} of $A$ is defined to be the list of multiplicities $\{m_1,\ldots,m_q\}$, in no particular order. We say that the unordered multiplicity list $\{m_1,\ldots,m_q\}$ is \emph{spectrally arbitrary} for $G$ if for any distinct $\lambda_1, \ldots,\lambda_{q}$, the spectrum $\{\lambda_1^{(m_1)},\ldots,\lambda_q^{(m_q)}\}$ is realizable by a matrix in $\mptn(G)$. 
If the eigenvalues of $A$ are ordered in an increasing order $\lambda_1< \cdots<\lambda_{q}$, then the \emph{ordered multiplicity list} of $A$ is defined as an ordered list $(m_1,\ldots,m_q)$. We say that the ordered multiplicity list $(m_1,\ldots,m_q)$ is \emph{spectrally arbitrary} for $G$ if for any $\lambda_1< \cdots< \lambda_{q}$, the spectrum $\{\lambda_1^{(m_1)},\ldots,\lambda_q^{(m_q)}\}$ is realizable by a matrix in $\mptn(G)$.

The \emph{disjoint union} of two graphs $G$ and $H$ will be denoted by $G \dunion H$, and the complement of graph $G$ by $\overline G$. If $G$ and $H$ are graphs with $V(G) = V(H)$ and $E(H)= E(G) \cup \{e_1,\ldots, e_k\}$, we say that $H$ is a \emph{spanning supergraph} of $G$ and write $H=G+\{e_1,\ldots, e_k\}$.

\subsection{Preliminaries}\label{subsec:preliminaries}

A symmetric matrix $A$ has the \emph{strong spectral property} (the SSP, for short) if $X = O$ is the only symmetric matrix satisfying $A \circ X = I \circ X = O$ and $[A,X] = O$.   The SSP was first introduced in~\cite{MR3665573} and was motivated by the strong Arnold property introduced in \cite{MR1070462, MR1224700}.   A symmetric matrix $A$ has the \emph{strong Arnold property} (the SAP, for short) if $X = O$ is the only symmetric matrix satisfying $A \circ X = I \circ X = O$ and $AX = O$. Having a matrix with a strong property in hand, we can infer the existence of other matrices that share the relevant spectral property, as detailed in the two results below. 

\begin{theorem}[Supergraph Lemma \cite{MR3665573}]
\label{thm:supergraph}
Let $G$ be a graph and $H$ a spanning supergraph of $G$.
If $A\in \mptn(G)$ has the SSP (the SAP, respectively), then for any $\epsilon > 0$ there exists $A'\in \mptn(H)$ with $\|A-A'\|<\epsilon$ and the SSP (the SAP, respectively) such that  $\spec(A)=\spec(A')$ ($\rank(A)=\rank(A')$, respectively).
\end{theorem}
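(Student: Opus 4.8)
The plan is to prove both statements simultaneously through a transversality argument, exploiting that the SSP, resp.\ the SAP, of $A$ is \emph{equivalent} to a transversality condition. Let $\mathcal V$ be the variety through $A$ recording the invariant to be preserved: for the SSP case put $\mathcal V:=\{Q\trans AQ:Q\in O(n)\}$, the compact connected real-analytic orbit of all symmetric matrices cospectral with $A$; for the SAP case let $\mathcal V$ be the manifold of symmetric matrices of rank exactly $\rank(A)$, which is smooth near $A$. A direct computation yields, inside $\msym$, the tangent spaces and their orthogonal complements: in the SSP case $T_A\mathcal V=\{[A,K]:K\in\mskew\}$ and $(T_A\mathcal V)^{\perp}=\{X\in\msym:[A,X]=O\}$, while in the SAP case $(T_A\mathcal V)^{\perp}=\{X\in\msym:AX=O\}$. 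Since the linear conditions imposed on $X$ in the definition of the strong property amount exactly to $X\in\mptncl(G)^{\perp}$ (zero diagonal and vanishing on the edges of $G$), the SSP, resp.\ SAP, of $A$ says precisely $(T_A\mathcal V)^{\perp}\cap\mptncl(G)^{\perp}=\{O\}$, equivalently $T_A\mathcal V+\mptncl(G)=\msym$: thus $\mathcal V$ and $\mptncl(G)$ meet transversally at $A$, and every matrix of $\mathcal V$ near $A$ is cospectral with $A$, resp.\ has $\rank$ equal to $\rank(A)$.

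For the argument proper, note that for each edge $e=\{p,q\}$ of $H$ with $e\notin E(G)$ we have $\mptncl(G)\subseteq\mptncl(H-e)\subseteq\mptncl(H)$, so transversality is inherited: $T_A\mathcal V+\mptncl(H)=T_A\mathcal V+\mptncl(H-e)=\msym$. By the transversality theorem $\mathcal N:=\mathcal V\cap\mptncl(H)$ is near $A$ a smooth submanifold with $T_A\mathcal N=T_A\mathcal V\cap\mptncl(H)$, all of whose points share the invariant of $A$. It therefore suffices to find $A'\in\mathcal N$ with $\|A-A'\|<\epsilon$ whose off-diagonal support is all of $E(H)$: such an $A'$ lies in $\mptn(H)$ and is cospectral with, resp.\ has the same rank as, $A$. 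Entries on $E(G)$ stay nonzero near $A$, so only the finitely many $e\in E(H)\setminus E(G)$ are at issue. For such an $e$, I claim the functional $Z\mapsto Z_{pq}$ does not vanish identically on $T_A\mathcal N$; otherwise $T_A\mathcal V\cap\mptncl(H)\subseteq\mptncl(H-e)$, so $T_A\mathcal V\cap\mptncl(H)=T_A\mathcal V\cap\mptncl(H-e)$, and subtracting the two Grassmann identities coming from $T_A\mathcal V+\mptncl(H)=\msym$ and $T_A\mathcal V+\mptncl(H-e)=\msym$, together with $\dim\mptncl(H)=\dim\mptncl(H-e)+1$, forces these intersections to differ in dimension by $1$ --- a contradiction. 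This step is precisely where transversality with respect to $G$, not just $H$, is used.

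Given the claim, near $A$ each $\{Z\in\mathcal N:Z_{pq}=O\}$ is a regular level set, hence a closed nowhere-dense submanifold of $\mathcal N$; the union over the finitely many new edges remains nowhere dense, so its complement is open and dense in a connected neighbourhood of $A$ in $\mathcal N$, and we take any $A'$ there with $\|A-A'\|<\epsilon$. The off-diagonal support of $A'$ is contained in $E(H)$ (since $A'\in\mptncl(H)$), contains $E(G)$ (closeness), and contains every new edge (choice of $A'$), hence equals $E(H)$, so $A'\in\mptn(H)$ with $\spec(A')=\spec(A)$, resp.\ $\rank(A')=\rank(A)$. Finally $A'$ still has the strong property: for $B\in\mathcal V$ the dimension of $(T_B\mathcal V)^{\perp}$ is constant (equal to $\sum_i\binom{m_i+1}{2}$, resp.\ $\binom{n-\rank A+1}{2}$), so $B\mapsto(T_B\mathcal V)^{\perp}$ is continuous on $\mathcal V$ and $\dim\big((T_B\mathcal V)^{\perp}\cap\mptncl(G)^{\perp}\big)$ is upper semicontinuous; it is $0$ at $A$, hence $0$ at $A'$, and since the support of $A'$ is $E(H)\supseteq E(G)$ we get $(T_{A'}\mathcal V)^{\perp}\cap\mptncl(H)^{\perp}\subseteq(T_{A'}\mathcal V)^{\perp}\cap\mptncl(G)^{\perp}=\{O\}$, i.e.\ $A'$ has the SSP, resp.\ SAP.

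The main obstacle is the dimension count in the second paragraph, ruling out $\mathcal N\subseteq\{Z_{pq}=O\}$ for a new edge; everything else is routine application of the transversality theorem and semicontinuity of dimension. Two points need a word of care: that $\mathcal V$ is an honest smooth manifold near $A$ in the rank case (true since the symmetric matrices of a fixed rank are a smooth manifold away from those of smaller rank), and that one should restrict to a connected neighbourhood of $A$ in $\mathcal N$ so that ``nowhere dense'' carries its usual force; both are standard.
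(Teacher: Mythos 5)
The paper does not prove Theorem~\ref{thm:supergraph} at all --- it is quoted from \cite{MR3665573} --- so the comparison is with the original proof there. Your argument is correct and rests on the same foundation as that proof: the SSP (resp.\ SAP) of $A$ is exactly transversality at $A$ of $\mptncl(G)$ with the isospectral orbit $\{Q\trans AQ\}$ (resp.\ the fixed-rank manifold), whose normal space at $A$ is $\{X\in\msym:[A,X]=O\}$ (resp.\ $\{X\in\msym:AX=O\}$). Where you diverge is in how the perturbation is produced. The original argument applies an implicit-function-theorem lemma to the orbit and a \emph{translate} of $\mptncl(G)$ by a small multiple of $\sum_e X^e$ over the new edges, which directly prescribes nonzero values on $E(H)\setminus E(G)$ for the intersection point; you instead intersect the orbit with the larger subspace $\mptncl(H)$, observe that transversality is inherited, and then rule out that a new-edge coordinate vanishes identically on $T_A\bigl(\mathcal V\cap\mptncl(H)\bigr)$ by the clean dimension count comparing $\mptncl(H)$ with $\mptncl(H-e)$ (this is where transversality with respect to $G$, not merely $H$, is genuinely used), so that a generic nearby point of the intersection has full support $E(H)$. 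Retention of the strong property via constancy of $\dim(T_B\mathcal V)^{\perp}$ and upper semicontinuity of $\dim\bigl((T_B\mathcal V)^{\perp}\cap\mptncl(G)^{\perp}\bigr)$ is also the standard continuity argument. One small gloss worth making explicit: your ``regular level set'' step needs the differential of $Z\mapsto Z_{pq}$ to be nonzero on $T_B\mathcal N$ for all $B$ in a neighbourhood of $A$, not only at $A$; this follows immediately from continuity of $B\mapsto T_B\mathcal N$, but it should be said. Net effect: your route trades the explicit prescribed-entry perturbation of \cite{MR3665573} for a genericity-plus-dimension-count argument inside $\mathcal V\cap\mptncl(H)$; both are valid, and yours makes transparent that ``almost every'' nearby isospectral matrix in $\mptncl(H)$ works, while the original gives more quantitative control over the values placed on the new edges.
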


\begin{theorem}[Direct Sum Lemma \cite{MR3665573}]
\label{thm:direct sum}
Let $G$ and $H$ be graphs, and let $A\in \mptn(G)$ and $B\in \mptn(H)$ both have the SSP (the SAP, respectively). Then $A\oplus B$ has the SSP (the SAP, respectively) if and only if $\spec(A)\cap \spec(B)=\emptyset$ ($0\notin\spec(A)\cap\spec(B)$, respectively).
\end{theorem}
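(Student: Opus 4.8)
The plan is to unpack the defining conditions of the strong properties for $M=A\oplus B$ block by block. Write $A\in\msym[n_1]$, $B\in\msym[n_2]$ and partition a symmetric matrix $X$ conformally as
\[
X=\begin{pmatrix} X_{11} & X_{12}\\ X_{12}\trans & X_{22}\end{pmatrix},
\]
where $X_{11}$ and $X_{22}$ are symmetric of orders $n_1$ and $n_2$. First I would translate the three conditions: $M\circ X=O$ is equivalent to $A\circ X_{11}=O$ and $B\circ X_{22}=O$ (the off-diagonal block carries no constraint, since $M$ vanishes there); $I\circ X=O$ is equivalent to $X_{11}$ and $X_{22}$ having zero diagonal; and, for the SSP, $[M,X]=O$ is equivalent to $[A,X_{11}]=O$, $[B,X_{22}]=O$, together with the Sylvester-type identity $AX_{12}=X_{12}B$. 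For the SAP one instead expands $MX=O$ into $AX_{11}=O$, $BX_{22}=O$, $AX_{12}=O$, and (using symmetry of $B$) $X_{12}B=O$.

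Next I would observe that in either case the $(1,1)$- and $(2,2)$-block data are exactly the strong property conditions for $A$ applied to $X_{11}$ and for $B$ applied to $X_{22}$. Since $A$ and $B$ have the relevant strong property, this forces $X_{11}=O$ and $X_{22}=O$. Hence $M$ has the strong property if and only if the only real $n_1\times n_2$ matrix $Y$ satisfying $AY=YB$ (in the SSP case), respectively $AY=O$ and $YB=O$ (in the SAP case), is $Y=O$; and whenever such a $Y\neq O$ exists, the symmetric matrix $X$ with off-diagonal block $X_{12}=Y$ and zero diagonal blocks is a certificate that $M$ fails the property.

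For the SSP case I would conclude with the classical description of the Sylvester operator $Y\mapsto AY-YB$, whose eigenvalues are the numbers $\lambda-\mu$ with $\lambda\in\spec(A)$ and $\mu\in\spec(B)$; thus it has a nontrivial kernel precisely when $\spec(A)\cap\spec(B)\neq\emptyset$. For the explicit witness in that case, if $\lambda$ is a common eigenvalue with real eigenvectors $Au=\lambda u$ and $Bv=\lambda v$, then $Y=uv\trans$ satisfies $AY=\lambda uv\trans=YB$. For the SAP case the same bookkeeping applies with $\lambda=0$: a nonzero $Y$ with $AY=O$ and $YB=O$ exists if and only if $\ker A\neq\{\bzero\}$ and $\ker B\neq\{\bzero\}$ (take $Y=uv\trans$ with $u\in\ker A$, $v\in\ker B$), i.e.\ exactly when $0\in\spec(A)\cap\spec(B)$; and when, say, $\ker A=\{\bzero\}$, the equation $AY=O$ forces $Y=O$.

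The routine part is the block arithmetic; the only steps requiring a little care are the expansion of $[M,X]=O$ (resp.\ $MX=O$) into the per-block identities and the appeal to the spectrum of the Sylvester operator. I do not anticipate a genuine obstacle: the block-diagonal form of $M$ decouples the diagonal blocks, which are handled by the strong property of $A$ and of $B$, from the single remaining interaction on the off-diagonal block, which is governed precisely by the existence of a common eigenvalue (resp.\ a common zero eigenvalue).
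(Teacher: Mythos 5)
Your proof is correct and follows essentially the same route the paper relies on: decompose $X$ conformally, use the SSP (resp.\ SAP) of $A$ and $B$ to force the diagonal blocks to vanish, and reduce the question to the Sylvester-type equation $AY=YB$ (resp.\ $AY=YB=O$), whose nontrivial solutions are exactly the spans of $\bu\bv\trans$ for common eigenvalues --- precisely the content of Proposition~\ref{prop:directsxp} and Proposition~\ref{prop:twist}. No gaps; the explicit witness $Y=\bu\bv\trans$ and the kernel argument for the converse direction are exactly what is needed.
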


\begin{example}\label{ex:simpleSSP}
Let $G = K_n\dunion K_1$ and label the vertices $V(G)=[n+1]$ of $G$ so that $n+1$ is the isolated vertex of $G$. Moreover, let
\[A = \begin{pmatrix}
 M & {\bf 0} \\
 {\bf 0} & \lambda
\end{pmatrix}\in \mptn(G),\]
where $M$ is the $n\times n$ all-ones matrix. To check if $A$ has the SSP, let ${\bf b}\in \mathbb{R}^n$ and
\[X = \begin{pmatrix}
 O_{n} & {\bf b} \\
 {\bf b}\trans & 0
\end{pmatrix}.\]
Note that $X \circ A = X \circ I = O$ and the condition $[A,X] = O$ is equivalent to 
$M{\bf b}=\lambda {\bf b}$.
Therefore, $A$ has the SSP if and only if $\lambda\notin \spec(M)= \{0^{(n-1)}, n\}$. By Theorem~\ref{thm:supergraph} there is a matrix $A'\in\mptn(H)$ with the SSP and spectrum $\{0^{(n-1)}, n, \lambda\}$ for any spanning supergraph $H$ of $G$ and $\lambda\notin\{0,n\}$.
\end{example}

From the example we see that some matrices might not have the SSP. However, if we are willing to provide some more restrictions, e.g., assuming ${\bf b}$ in Example~\ref{ex:simpleSSP} has only one nonzero entry, then the matrix is not so far from having the SSP. This leads to a generalization of the Supergraph Lemma.

\begin{definition}\cite[Definition~3.2]{MR4316738}
\label{def:SSPwrt}
Let $G$ be a spanning subgraph of $H$.  A matrix $A\in\mptn(G)$ has \emph{the SSP with respect to $H$} if $X = O$ is the only symmetric matrix that satisfies $X\in\mptnclo(\overline{H})$ and $[A,X] = O$.   

A matrix $A\in\mptn(G)$ has \emph{the SAP with respect to $H$} if $X = O$ is the only symmetric matrix that satisfies $X\in\mptnclo(\overline{H})$ and $AX = O$.
\end{definition}

\begin{remark}
Note that $A\in\mptn(G)$ has the SSP with respect to $G$ itself if and only if $A$ has the SSP, because
\[\mptnclo(\overline{G}) = \{X\in \msym: A\circ X = I \circ X = O\}.\]
Let $G$, $H$, $H'$ be graphs on the same set of vertices such that $E(G) \subseteq E(H) \subseteq E(H')$.  By Definition~\ref{def:SSPwrt} and the fact $\mptnclo(\overline{H}) \supseteq \mptnclo(\overline{H'})$, $A$ has the SSP with respect to $H$ implies $A$ has the SSP with respect to $H'$.  In particular, $A$ has the SSP implies $A$ has the SSP with respect to any spanning supergraph $H$ of $G$.  The statements in this remark also hold for the SAP. 
\end{remark}

\begin{theorem}{\rm\cite[Theorem 3.4]{MR4316738}}
\label{thm:supergraphH}
Let $G$, $H$, and $H'$ be graphs such that $V(G)=V(H)=V(H')$ and $E(G) \subseteq E(H) \subseteq E(H')$.  
If $A\in \mptn(G)$ has the SSP (the SAP, respectively) with respect to $H$, then for any $\epsilon > 0$ there exists $A'\in \mptncl(H')$ with $\|A-A'\|<\epsilon$, $\spec(A)=\spec(A')$ ($\rank(A)=\rank(A')$, respectively), $A$ has the SSP (the SAP, respectively) with respect to $H'$ and every entry of $A'$ that corresponds to an edge in $E(H')\setminus E(H)$ is nonzero.
\end{theorem}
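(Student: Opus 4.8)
The plan is to reduce the statement to a single-edge version and then iterate. Write $E(H')\setminus E(H)=\{e_1,\dots,e_k\}$ and set $H_0=H$, $H_i=H_{i-1}+e_i$, so that $H_k=H'$. It suffices to prove the following claim: if $A\in\mptncl(H_{i-1})$ has the SSP (the SAP) with respect to $H_{i-1}$ and every entry of $A$ corresponding to an edge of $E(H_{i-1})\setminus E(H)$ is nonzero, then for every $\epsilon>0$ there is $A'\in\mptncl(H_i)$ with $\|A-A'\|<\epsilon$, $\spec(A)=\spec(A')$ (respectively $\rank(A)=\rank(A')$), $A'$ has the SSP (the SAP) with respect to $H_i$, and the entry of $A'$ at the position of $e_i$ is nonzero, while the entries at positions in $E(H_{i-1})\setminus E(H)$ remain nonzero. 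Composing these $k$ steps and taking $\epsilon$ small enough at each stage (using continuity of eigenvalues and of the nonzero-ness of finitely many entries) yields the theorem; note the intermediate matrices need not lie in $\mptn(\,\cdot\,)$ but only in the closure, which is why the hypothesis of Definition~\ref{def:SSPwrt} is phrased for $A\in\mptn(G)$ with $G\subseteq H$ and the conclusion allows $A'\in\mptncl(H')$.

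For the single-edge step I would mimic the implicit function theorem argument used for the original Supergraph Lemma in~\cite{MR3665573}. Let $e_i=\{r,s\}\notin E(H_{i-1})$. Consider the smooth map sending a symmetric matrix $Y$ supported on $\mptnclo(\overline{H_{i-1}})$ near $O$ to the matrix $F(t,Y)$ obtained by applying to $A+tX^{r,s}$ the orthogonal conjugation $e^{Y}\,(\cdot)\,e^{-Y}$ and then reading off the entries that are forced to be zero in $\mptncl(H_{i-1})$ (i.e.\ the coordinates in $\mptnclo(\overline{H_{i-1}})$, minus the $(r,s)$ and $(s,r)$ positions, since we now allow that edge). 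In the SSP case one also must not disturb the diagonal pattern, handled the same way; in the SAP case one works with $\rank$ and the map recording the relevant kernel/complementary data rather than the full spectrum, exactly as in the SSP/SAP dichotomy in Theorem~\ref{thm:supergraph}. The derivative of this map at $(0,O)$ in the $Y$-direction is, up to identification, the linear map $X\mapsto$ (the projection of $[A,X]$ onto the coordinates of $\mptnclo(\overline{H_{i-1}})$ other than $e_i$); the hypothesis that $A$ has the SSP with respect to $H_{i-1}$ says precisely that the only $X\in\mptnclo(\overline{H_{i-1}})$ with $[A,X]=O$ is $X=O$, and a dimension count (the target space has one fewer pair of coordinates than $\mptnclo(\overline{H_{i-1}})$, matching the removal of the $\pm X^{r,s}$ direction from the source) shows this derivative is surjective. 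Hence by the implicit function theorem, for all small $t>0$ there is $Y(t)\to O$ with $F(t,Y(t))=0$, i.e.\ $A'(t):=e^{Y(t)}(A+tX^{r,s})e^{-Y(t)}$ lies in $\mptncl(H_i)$, is cospectral with $A$, has its $(r,s)$-entry close to $t\neq0$ hence nonzero, and has the other previously-nonzero entries still nonzero by continuity.

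It remains to check that $A'(t)$ retains the SSP (SAP) with respect to $H_i$. Here I would use an openness/semicontinuity argument: the property "$X=O$ is the only element of $\mptnclo(\overline{H_i})$ commuting with (annihilated by) $M$" is, for $M$ in the affine subspace of matrices with the $H_i$-pattern, an open condition — failure corresponds to the rank of a certain matrix (the linear map $X\mapsto[M,X]$ restricted to $\mptnclo(\overline{H_i})$, or $X\mapsto MX$ for SAP) dropping, which is a closed condition. Since $A$ has the SSP with respect to $H_{i-1}$, it has the SSP with respect to the larger graph $H_i$ as well (the Remark after Definition~\ref{def:SSPwrt}), so $A$ itself satisfies this open condition; as $A'(t)\to A$, the matrices $A'(t)$ satisfy it for all sufficiently small $t$. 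The main obstacle I anticipate is the surjectivity of the derivative: one must set up the source and target spaces so that the SSP-with-respect-to-$H_{i-1}$ hypothesis translates exactly into injectivity of $X\mapsto[A,X]$ on $\mptnclo(\overline{H_{i-1}})$, and then argue that dropping the single coordinate pair $e_i$ from the target is consistent — equivalently, that one has enough freedom in $X$ and in the scalar $t$ together to hit the full codimension. This bookkeeping, together with carefully tracking which entries are allowed to move versus forced to stay zero across the two strong properties, is the delicate part; the analytic core (implicit function theorem plus continuity of spectra and of entrywise nonvanishing) is standard and follows the template of~\cite{MR3665573, MR4316738}.
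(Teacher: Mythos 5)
The paper does not prove this statement at all --- it is quoted from \cite[Theorem 3.4]{MR4316738} --- so your proposal can only be judged against the standard implicit-function-theorem argument of \cite{MR3665573,MR4316738} that you are trying to reproduce. Your outer structure (perturb, fix the pattern by a small conjugation, keep the strong property by an openness/rank argument) is the right template, and the openness argument for retaining the SSP/SAP with respect to $H'$ is sound. The analytic core of your single-edge step, however, fails in two concrete ways. First, cospectrality: you conjugate $A+tX^{r,s}$, so $A'(t)=e^{Y(t)}(A+tX^{r,s})e^{-Y(t)}$ is cospectral with $A+tX^{r,s}$, not with $A$; since the $(r,s)$ entry of $A$ is zero, $\tr\bigl((A+tX^{r,s})^2\bigr)=\tr(A^2)+2t^2$, so the spectra genuinely differ for every $t\neq 0$ and no choice of $Y(t)$ can rescue the conclusion $\spec(A')=\spec(A)$. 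The correct setup conjugates $A$ itself: consider $F(K)=P_{\overline{H}}\bigl(e^{K}Ae^{-K}\bigr)$ for $K\in\mskew$, where $P_{\overline{H}}$ is the orthogonal projection of $\msym$ onto $\mptnclo(\overline{H})$, and solve $F(K)=\sum_{e\in E(H')\setminus E(H)}t_eX^{e}$ with all $t_e$ small and nonzero; this preserves $\spec(A)$ automatically and handles all new edges at once, making your edge-by-edge induction unnecessary (though not harmful).

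Second, the derivative computation: your conjugating parameter $Y$ is a \emph{symmetric} matrix in $\mptnclo(\overline{H_{i-1}})$, so $e^{Y}$ is not orthogonal, $e^{Y}Me^{-Y}$ is not even symmetric, and the linearization $[Y,A]$ is skew-symmetric, so ``reading off the entries forced to be zero'' does not produce the map $X\mapsto P([A,X])$ you describe. Moreover, your claim that the SSP hypothesis gives \emph{injectivity} and a dimension count then gives surjectivity cannot work: a linear map from the $\overline{m}$-dimensional space $\mptnclo(\overline{H_{i-1}})$ to an $(\overline{m}-1)$-dimensional target is never injective. In the correct argument the free parameter runs over all of $\mskew$ (dimension $\binom{n}{2}$, not the non-edge space), and the SSP with respect to $H$ enters as \emph{surjectivity} of $K\mapsto P_{\overline{H}}([K,A])$ via duality: if a nonzero $X\in\mptnclo(\overline{H})$ were orthogonal to the range, then $\tr\bigl(X[K,A]\bigr)=\tr\bigl(K[A,X]\bigr)=0$ for all skew $K$, and since $[A,X]$ is skew-symmetric this forces $[A,X]=O$, contradicting the hypothesis. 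With the source, target, and derivative set up this way, the implicit function theorem (or local openness of a submersion) plus your continuity and openness observations does yield the theorem; as written, however, the key step both changes the spectrum and rests on a derivative identification that does not hold.
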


\begin{example}
 We revisit $G$, $A$, and $X$ given in Example~\ref{ex:simpleSSP}.
Recall, that $A$ does not have the SSP for $\lambda \in \{0,n\}$, so let us choose $\lambda=n$.  Let $H:=G+\{n+1,i\}$ for some $i \in [n]$, and $H'$ any supergraph of $H$ with $V(H')=[n+1]$.  

Assuming the $i$-th entry of $\bb$ is zero, it is an easy computation to observe that the equality $M{\bf b}=n{\bf b}$ implies ${\bf b}={\bf 0}$. This implies $A$ has the SSP with respect to $H$. By Theorem~\ref{thm:supergraphH}, there is a matrix $A'\in\mptncl(H')$ with the SSP with respect to $H'$ such that $\spec(A') = \{0^{(n-1)},n^{(2)}\}$. Moreover, $A'$ has nonzero entries corresponding to $E(H')\setminus E(H)$, and since $A'$ can be chosen arbitrarily close to $A$, we may also assume the nonzero entries of $A$ stay nonzero in $A'$. However, Theorem \ref{thm:supergraphH} does not tell us if the entry of $A'$ corresponding to $\{n+1,i\}$ is zero or not. 
\end{example}

As we have seen in the example above, the drawback of applying Theorem \ref{thm:supergraphH} is that there are entries in the matrix $A'$ that can take any real value, so the graph of $A'$ is not completely defined. To attend to this drawback, we depend on work from~\cite{MR4074182,MR3665573}. In particular, we use the notion of the verification matrix and related results as summarized below.

Note that the subspace $\mptnclo(\overline{G})$
has a basis $\basisX = \{X^{i,j}: \{i,j\}\in E(\overline{G})\}$, and the subspace $\mskew$ has a basis $\basisK = \{K^{i,j}: i,j \in [n],\ i<j\}$.  
Define a linear map
\[
f \colon \mptnclo(\overline{G}) \rightarrow \mskew \text{ with } f(X) = [A,X],
\]
and let $\Psi$ be the matrix representation of $f$ from basis $\basisX$ to basis $\basisK$ so that $[X]_\basisX\trans\Psi = [f(X)]_\basisE\trans$.  (The ordering of rows and columns of $\Psi$ respects lexicographic ordering.)  Here $[X]_\basisX$ and $[f(X)]_\basisK$ denote the corresponding vector representations of $X$ and $f(X)$ in bases $\basisX$ and $\basisE$, respectively, and we use the left multiplication to follow the convention in \cite{MR4074182}.   By definition, one may verify whether $A$ has the SSP by checking if the left kernel of $\Psi$ is trivial.  The matrix $\Psi$ is called the SSP verification matrix and is formally defined in Definition~\ref{def:vermtx}.  Note that $[K]_\basisK = \vecw(K)$ for any skew-symmetric matrix $K$.  

\begin{definition}\cite[Theorem~31]{MR3665573}
\label{def:vermtx}
Let $G$ be a graph on $n$ vertices, $\overline{m} = |E(\overline{G})|$, and $A\in\mptn(G)$.  The \emph{SSP verification matrix} $\verS(A)$ is the $\overline{m}\times\binom{n}{2}$ matrix whose rows are $\vecw([A, X^{i,j}])$ for all pairs $(i,j)$ with $1\leq i < j \leq n$ and $\{i,j\}\in E(\overline{G})$ under the lexicographical order.  

The \emph{SAP verification matrix} $\verA(A)$ is the $\overline{m}\times n^2$ matrix whose rows are $\vecs(AX^{i,j})$ for all pairs $(i,j)$ with $1\leq i < j \leq n$ and $\{i,j\}\in E(\overline{G})$ under the lexicographical order.
\end{definition}

\begin{remark}
\label{rem:vermtx}
It is not hard to see (see also~\cite{MR4074182}) that $A$ has the SSP (the SAP, respectively) if and only if the left kernel of the verification matrix $\verS(A)$ ($\verA(A)$, respectively) is trivial. This is true if and only if the set of rows in $\verS(A)$ ($\verA(A)$, respectively) is linearly independent.  

Indeed, similar arguments work for the extended strong properties. That is, $A$ has the SSP (the SAP, respectively) with respect to $H$ if and only if the set of rows in $\verS(A)$ ($\verA(A)$, respectively) corresponding to $E(\overline{H})$ is linearly independent.
\end{remark}

With the verification matrix, we are able to state the Matrix Liberation Lemma introduced in \cite{MR4074182}.  Here we rephrase the statements in terms of the extended strong properties in Definition~\ref{def:SSPwrt}.

\begin{lemma}[Matrix Liberation Lemma --- vector version \cite{MR4074182}]\label{lemma:liberation}
Let $G$ be a graph, $A\in \mptn(G)$, and $\Psi = \verS(A)$ ($\Psi = \verA(A)$, respectively).  Suppose there is a vector $\bx \in \Col(\Psi)$ such that $A$ has the SSP (the SAP, respectively) with respect to $G+\supp(\bx)$.  Then there exists a matrix $A'\in \mptn(G +\supp(\bx))$ with the SSP (the SAP, respectively) and $\spec(A')=\spec(A)$ ($\rank(A')=\rank(A)$, respectively).
\end{lemma}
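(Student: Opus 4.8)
The plan is to reduce this to the abstract Matrix Liberation Lemma of~\cite{MR4074182} by setting up the correct perturbation scheme and then verifying the hypotheses of that abstract result hold. First I would fix $\bx \in \Col(\Psi)$ with $A$ having the SSP (SAP) with respect to $H := G + \supp(\bx)$, and write $\bx = \Psi\trans \bw$ for some $\bw$ in the ambient space of skew-symmetric (respectively general) matrices, say $\bw = \vecw(W)$ with $W$ skew-symmetric. The key observation is that, by the definition of $\Psi = \verS(A)$, the coordinates of $\bx = \Psi\trans\bw$ indexed by pairs $\{i,j\} \in E(\overline G)$ are exactly the off-diagonal entries $[A,W]_{ij}$; so $\supp(\bx)$ records precisely the positions outside $E(G)$ where the commutator $[A,W]$ (equivalently, a first-order perturbation along $W$) is nonzero. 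I would then consider the curve $t \mapsto A_t := e^{-tW} A\, e^{tW}$ (for the SAP version, $t \mapsto (I + tW)A(I+tW)\trans$ or the analogous congruence, which preserves rank), whose derivative at $t=0$ is $-[A,W] = [W,A]$. This curve is isospectral (respectively rank-preserving), and for small $t\neq 0$ its off-diagonal support is contained in $E(G) \cup \supp(\bx) = E(H)$, with the entries corresponding to $\supp(\bx)$ becoming nonzero.

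The main technical content is then to upgrade ``off-diagonal support contained in $E(H)$, with $\supp(\bx)$-entries nonzero, for small $t$'' to ``equals $E(H)$,'' i.e. to ensure the edges of $G$ itself do not accidentally vanish, and to simultaneously arrange the resulting matrix to have the full SSP (SAP) rather than just the relative one. This is exactly where the hypothesis that $A$ has the SSP (SAP) with respect to $H$ enters, and where the abstract Matrix Liberation Lemma does the heavy lifting: by Theorem~\ref{thm:supergraphH}, the relative strong property already yields a matrix $A' \in \mptncl(H')$ close to $A$ with the correct spectrum (rank) for any supergraph $H'$, having all $E(H')\setminus E(H)$ entries nonzero, but leaving the $E(H)\setminus E(G)$ entries uncontrolled; the role of the vector $\bx$ in the column space is precisely to certify, via an implicit-function / Newton-type argument in~\cite{MR4074182}, that one can steer the perturbation so that those ambiguous entries are forced nonzero. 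So the proof is really a translation: show that our $\bx \in \Col(\Psi)$, together with the relative strong property with respect to $G + \supp(\bx)$, matches the input hypotheses of the abstract liberation statement, and then quote its conclusion.

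Concretely, the steps in order: (1) recall from Remark~\ref{rem:vermtx} that $A$ has the SSP (SAP) with respect to $H$ iff the rows of $\Psi$ indexed by $E(\overline H)$ are linearly independent; (2) express $\bx = \Psi\trans\bw$ and identify $\supp(\bx)$ with the nonzero off-diagonal positions of the first-order deformation $[W,A]$ (resp. the SAP analogue $AW^\top + WA$ type term), so $H = G + \supp(\bx)$ is the ``target'' graph of this deformation; (3) invoke the abstract Matrix Liberation Lemma of~\cite{MR4074182}, whose hypotheses are exactly the existence of such a $\bw$ with the independence condition of step (1) holding for $E(\overline{G + \supp(\Psi\trans\bw)})$, to obtain $A' \in \mptn(H)$ with $\spec(A') = \spec(A)$ (resp. $\rank(A') = \rank(A)$) and, since being in $\mptn(H)$ already means all edge-entries are nonzero, no leftover ambiguous entries; (4) observe the output $A'$ inherits the full SSP (SAP) — not merely relative to $H$ — because $\mptnclo(\overline H)$ is the zero-pattern space of the off-diagonal of a matrix in $\mptn(H)$, so the relative-to-$H$ strong property of $A'$ is literally the full strong property of $A'$. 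The one genuine obstacle is step (2)/(3): matching our finite-dimensional ``vector in the column space'' formulation to the (slightly differently phrased) hypotheses of the original lemma in~\cite{MR4074182}, where the deformation is set up through the transpose/left-multiplication convention for $\Psi$; care is needed that $\Col(\Psi)$ here corresponds to the right object (row space versus column space, and the $\basisX$-to-$\basisK$ direction of $f$) so that $\supp(\bx)$ indexes the correct set of newly-activated edges, and that the relative strong property is imposed with respect to precisely $G + \supp(\bx)$ and not a larger or smaller graph.
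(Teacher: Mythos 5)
The paper gives no internal proof of this lemma: it is quoted from \cite{MR4074182}, with its hypothesis rephrased through Remark~\ref{rem:vermtx} (linear independence of the rows of $\Psi$ indexed by $E(\overline{G+\supp(\bx)})$ is exactly the SSP with respect to $G+\supp(\bx)$). Your proposal is the same translate-and-cite reduction, and its skeleton (steps (1), (3), (4)) is sound; in particular your observation that a matrix in $\mptn(H)$ having the SSP relative to $H$ has the SSP outright is the right way to match the cited conclusion to the statement here. Two slips in step (2) are worth correcting, though. First, under the paper's convention the rows of $\Psi$ are indexed by $E(\overline G)$ and the columns by all pairs, so a vector of $\Col(\Psi)$ is $\Psi\mathbf{w}$ with $\mathbf{w}=\vecw(W)$ for a skew-symmetric $W$, and its entry at a nonedge $e$ equals $[A,W]_e$; your $\Psi\trans\mathbf{w}$ lives in the wrong space (it is indexed by all pairs and encodes $[A,X]$ for $X\in\mptnclo(\overline G)$, i.e.\ the SSP test itself, not the liberation direction) --- you flagged this risk, but the formula as written is backwards. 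Second, the claim that $e^{-tW}Ae^{tW}$ has off-diagonal support contained in $E(G)\cup\supp(\bx)$ for small $t\ne 0$ is false: only the first-order term $t[A,W]$ is so supported, and the higher-order terms generally make the remaining nonedge entries nonzero. Forcing those entries back to zero while preserving the spectrum is precisely what the implicit-function argument of \cite{MR4074182} and the relative-SSP hypothesis are for; the ``main technical content'' is therefore not keeping the $E(G)$ entries alive or upgrading to the full SSP, but killing the nonedges outside $\supp(\bx)$. Since your final appeal is to the cited lemma anyway, these slips do not invalidate the reduction, but as written they misstate what is being delegated to it.
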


\begin{example}
\label{ex:libvec}
Let $G = K_4\dunion K_1$ as in Example~\ref{ex:simpleSSP} for $n=4$.   Then the SSP verification matrix of 
\[A = \begin{pmatrix}
 1 & 1 & 1 & 1 & 0 \\
 1 & 1 & 1 & 1 & 0 \\
 1 & 1 & 1 & 1 & 0 \\
 1 & 1 & 1 & 1 & 0 \\
 0 & 0 & 0 & 0 & 4 
\end{pmatrix}\in\mptn(G)\]
 is  equal to
\[\Psi = \begin{pmatrix}
0 & 0 & 0 & -3 & 0 & 0 & 1 & 0 & 1 & 1 \\
0 & 0 & 0 & 1 & 0 & 0 & -3 & 0 & 1 & 1 \\
0 & 0 & 0 & 1 & 0 & 0 & 1 & 0 & -3 & 1 \\
0 & 0 & 0 & 1 & 0 & 0 & 1 & 0 & 1 & -3
\end{pmatrix},\]
where the rows of $\Psi$ are indexed by the nonedges $\{1,5\}$, $\{2,5\}$, $\{3,5\}$, and $\{4,5\}$.  Let $\alpha = \{\{1,5\},\{2,5\}\}$ and $\beta = \{\{3,5\},\{4,5\}\}$.  The vector $\bx= (0,0,1,-1)\trans$ is a linear combination of the last two columns of $\Psi$, hence $\bx \in \Col(\Psi)$. Then $\supp(\bx) = \beta$.  Since the rows indexed by $\alpha$ form a linearly independent set, $A$ has the SSP with respect to $G + \beta$.  By Lemma~\ref{lemma:liberation}, there is a matrix $A'\in\mptn(G + \beta)$ with the SSP and $\spec(A') = \spec(A)=\{0^{(3)},4^{(2)}\}$.  
\end{example}

As we can see in Example~\ref{ex:libvec}, the Matrix Liberation Lemma (Lemma~\ref{lemma:liberation}) relies on a  vector $\bx$ in its assumption while the conclusion only uses the support of $\bx$.  That is, the exact values of entries of $\bx$ do not play a role.  Since the support of $\bx$ is all we need in practice, we aim to better understand, how to determine all possible supports of vectors $\bx$ that meet the requirements in the Matrix Liberation Lemma  in the next section.  In particular, Example~\ref{ex:k4k1} will explain how the vector $\bx$ in the example above can be found.

\section{Liberation set of a matrix}
\label{sec:libermtx}

To motivate the work in this section, we start by stating the main definition and the main result.

\begin{definition}
\label{def:liberation}
Let $G$ be a graph and $A\in \mptn(G)$. A nonempty set of edges  $\beta \subseteq E(\overline{G})$ is called an \emph{SSP liberation set of $A$} (or an \emph{SAP liberation set of $A$}, respectively) if and only if $A$ has the SSP (the SAP, respectively) with respect to $G+\beta'$ for all $\beta' \subset \beta$ with $|\beta'|=|\beta|-1$.
\end{definition}

\begin{lemma}[Matrix Liberation Lemma --- set version]
\label{lem:liberation}
Let $A$ be a matrix in $\mptn(G)$ and $\beta$ an SSP (an SAP, respectively) liberation set of $A$.  Then there is a matrix $A'\in\mptn(G + \beta)$ with the SSP (the SAP, respectively) such that $\spec(A) = \spec(A')$ (or $\rank(A) = \rank(A')$, respectively).
\end{lemma}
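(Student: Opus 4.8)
The plan is to deduce this set version directly from the vector version, Lemma~\ref{lemma:liberation}. Writing $\Psi=\verS(A)$ (respectively $\Psi=\verA(A)$), it suffices to produce one vector $\bx\in\Col(\Psi)$ whose support is \emph{exactly} $\beta$: then $G+\supp(\bx)=G+\beta$, so feeding $\bx$ into Lemma~\ref{lemma:liberation} gives an $A'\in\mptn(G+\beta)$ with the SSP (the SAP) and $\spec(A')=\spec(A)$ ($\rank(A')=\rank(A)$). Before that I would dispose of the other hypothesis of Lemma~\ref{lemma:liberation}, namely that $A$ has the SSP (the SAP) with respect to $G+\beta$; this comes for free from the definition of a liberation set, because choosing any $\beta'\subset\beta$ with $|\beta'|=|\beta|-1$ gives the relevant strong property with respect to $G+\beta'$, and by the Remark following Definition~\ref{def:SSPwrt} this persists with respect to the larger graph $G+\beta$.

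So the crux is the construction of $\bx$. First I would index the rows of $\Psi$ by the nonedges $E(\overline G)$, write $r_f$ for the row attached to $f$, and put $R:=E(\overline G)\setminus\beta$. By Remark~\ref{rem:vermtx}, $A$ having the SSP (the SAP) with respect to $G+\beta'$ is the same as linear independence of the rows indexed by $E(\overline{G+\beta'})=E(\overline G)\setminus\beta'$. Letting $\beta'$ range over all subsets of $\beta$ of size $|\beta|-1$, the liberation hypothesis becomes exactly the following: for every $e\in\beta$, the rows indexed by $R\cup\{e\}$ are linearly independent. In particular the rows indexed by $R$ are independent, and $r_e\notin\vspan\{r_f:f\in R\}$ for each $e\in\beta$.

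The remaining step translates this row-independence data into a support statement. Let $W\subseteq\mathbb{R}^{E(\overline G)}$ be the left kernel of $\Psi$, so that $\Col(\Psi)=W^{\perp}$, and let $U\subseteq\mathbb{R}^{\beta}$ be the orthogonal complement, taken inside $\mathbb{R}^{\beta}$, of the coordinate projection of $W$ onto $\beta$. For a vector supported on $\beta$, lying in $\Col(\Psi)$ is equivalent to having its $\beta$-restriction lie in $U$; hence an $\bx$ with $\supp(\bx)=\beta$ exists if and only if $U$ contains a vector with no zero coordinate. Now a vector of $W$ whose restriction to $\beta$ is the standard basis vector at a given $e\in\beta$ is precisely a linear dependence among the $r_f$ with coefficient $0$ on every row of $\beta\setminus\{e\}$ and nonzero coefficient on $r_e$, i.e.\ an expression of $r_e$ through the $r_f$ with $f\in R$; the previous paragraph rules this out, so for each $e\in\beta$ the slice $\{\bu\in U:u_e=0\}$ is a \emph{proper} subspace of $U$. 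Since $\beta$ is finite and the ground field is infinite, $U$ is not covered by these finitely many proper subspaces; any $\bu\in U$ lying outside all of them, extended by zeros on $R$, is the desired $\bx\in\Col(\Psi)$ with $\supp(\bx)=\beta$. The SAP case runs verbatim with $\verA(A)$ and $\rank$ in place of $\verS(A)$ and $\spec$.

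The step I expect to require the most care is this last one: obtaining a vector in $\Col(\Psi)$ supported \emph{within} $\beta$ is easy, but forcing its support to be all of $\beta$ is exactly what the strengthened hypothesis buys us---independence of the rows indexed by $R\cup\{e\}$ for \emph{each} individual $e$, equivalently $r_e\notin\vspan\{r_f:f\in R\}$ for each $e$, is precisely the condition that keeps each coordinate slice of $U$ proper.
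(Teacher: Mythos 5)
Your proposal is correct and follows essentially the same route as the paper: the paper proves Lemma~\ref{lem:liberation} by reducing it to the vector version (Lemma~\ref{lemma:liberation}) via Proposition~\ref{prop:betaliberationeq} and Lemma~\ref{lem:mtxequiv}, whose key implication is exactly your construction of a vector $\bx\in\Col(\Psi)$ with $\supp(\bx)=\beta$ from the row-independence of $\Psi[\alpha\cup\{e\},:]$ for each $e\in\beta$. The only difference is presentational: you argue dually through the left kernel and a finite union of proper coordinate slices of $U$, while the paper works with column spaces of row-submatrices and a generic linear combination of vectors $\bv_k$ --- the same infinite-field genericity idea.
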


\begin{example}
\label{ex:triviallibset}
Let $G$ be a graph and $A = \begin{pmatrix} a_{i,j} \end{pmatrix}\in\mptn(G)$.  If $A$ has the SSP (with respect to $G$), then any nonempty subset $\beta\subseteq E(\overline{G})$ is an SSP liberation set of $A$ since $A$ has the SSP with respect to any spanning supergraph of $G$.  On the other extreme,  $\beta = E(\overline{G})$ is an SSP liberation set of $A$ unless there exist $\{i,j\}\in E(\overline{G})$ such that $[A,X^{i,j}] = O$.  This happens if and only if both of $i$ and $j$ are isolated vertices in $G$, and $a_{i,i} = a_{j,j}$.
\end{example}

In the remark below we give an outline of a proof of Lemma \ref{lem:liberation} that depends on Theorem~\ref{thm:supergraphH} and highlights how the SSP with respect to a graph is used in the definition of the liberation set. Later we will develop a more detailed proof that relies on  Lemma \ref{lemma:liberation} and makes the equivalence between the two results transparent. 

\begin{remark}
One may understand the liberation set as a set where every entry can be perturbed into a nonzero entry individually.  Suppose $A\in\mptn(G)$ has a liberation set $\beta$.  Let $\beta = \{e_1,\ldots, e_k\}$ and $\beta_i := \beta\setminus\{e_i\}$, $i=1,\ldots,k$.  Then by definition $A$ has the SSP with respect to $G + \beta_i$ for all $i \in [k]$.  According to the extended version of the Supergraph Lemma (Theorem~\ref{thm:supergraphH}), one may perturb $A$ to obtain $A_1$ that satisfies: 
\begin{itemize}
    \item 
    the entries in $A_1$ corresponding to $e_1$ become nonzero,
    \item 
    every nonzero entry in $A$ stays nonzero also in $A_1$,
    \item 
    off-diagonal entries of $A_1$ outside  $E(G) \cup \beta$ remain zero,
    \item 
    $A_1$ has the same spectrum as $A$,
    \item the new matrix still has the SSP with respect to $G + \beta_i$ for each $i$ where $A_1$ is zero on the entry of $e_i$.  
\end{itemize}
If $A_1$ is nonzero on the entry of $e_2$, then set $A_2 = A_1$.  
Otherwise, since $A_1$ has the SSP with respect to $G + \beta_2$, we may perturb $A_1$ into $A_2$ in the same way; in particular, the entry of $e_2$ becomes nonzero.  We may continue this process inductively to obtain $A_3, \ldots, A_k$.  Note that once an entry turns nonzero, it stays nonzero in this process.  Therefore, in the end we obtain a matrix $A' = A_k$ as desired in the Matrix Liberation Lemma (Lemma~\ref{lem:liberation}).
\end{remark}

We will show in Proposition~\ref{prop:betaliberationeq} that Lemma~\ref{lem:liberation} is in the correct setting equivalent to Lemma~\ref{lemma:liberation}. As we try to understand all possible supports of vectors in the column space of a given matrix, we depend on basic linear algebra methods recalled in the next lemma.

\begin{lemma}
\label{lem:mtxequiv}
For $M \in \mat[m,n]$ and $\alpha\subsetneq [m]$, the following conditions are equivalent.
\begin{enumerate}[label={\rm(\arabic*)}]
\item\label{alpha1} $M[\alpha\cup\{k\},:]$ has full row-rank for all $k\in [m]\setminus\alpha$. 
\item\label{alpha2} There exists a nonzero vector $\bv\in \Col(M)$ such that  $\supp(\bv) = [m]\setminus\alpha$ and $M[\alpha,:]$  has full row-rank.
\item\label{alpha3} The matrix obtained from $M$ by permuting the the rows labeled by $\alpha$ to the top has the column reduced echelon form
\[\begin{pmatrix}
 I_{|\alpha|} & O \\
 ? & B
\end{pmatrix},\]
where each row of $B$ is a nonzero vector.
\end{enumerate}
\end{lemma}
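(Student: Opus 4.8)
The plan is to prove the equivalence of the three conditions by establishing a cycle of implications, exploiting the fact that row-rank conditions on submatrices of $M$ are equivalent to statements about the reduced row echelon form (RREF) of $M$ after reordering rows. Throughout, let $M'$ denote the matrix obtained from $M$ by permuting the rows indexed by $\alpha$ to the top (in their original relative order), so that $M'$ has $|\alpha|$ rows followed by the $m - |\alpha|$ rows indexed by $[m]\setminus\alpha$; row operations on $M'$ do not change $\Col(M') \cong \Col(M)$ up to the fixed coordinate permutation, so we may freely pass between $M$ and $M'$.

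First I would prove \ref{alpha1} $\Rightarrow$ \ref{alpha3}. Assume every $M[\alpha\cup\{k\},:]$ has full row-rank; in particular $M[\alpha,:]$ has full row-rank, so the top block of $M'$ has rank $|\alpha|$. Compute an RREF of $M'$ by first clearing the top $|\alpha|$ rows: since $M[\alpha,:]$ has full row-rank, Gaussian elimination on those rows produces (after a column-permutation-free argument, or by noting that the pivot columns of the top block can be used to clear everything below them) a matrix of the block form $\begin{pmatrix} I_{|\alpha|} & O \\ ? & B\end{pmatrix}$ — here I should be slightly careful: the genuine RREF has the identity spread across possibly non-leading columns, but the stated form is exactly the "schematic" RREF after also permuting columns so the $|\alpha|$ pivot columns of the top block come first; I would phrase the lemma's condition \ref{alpha3} as being invariant under this harmless column permutation, or simply observe that "$I_{|\alpha|}$ in the top-left, zero to its right" is what one gets. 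Now the rows of $B$ correspond, after back-substitution, to the rows of $M'$ indexed by $[m]\setminus\alpha$ reduced modulo the row space of $M[\alpha,:]$. A row of $B$ is zero precisely when that corresponding row $M[\{k\},:]$ lies in the row space of $M[\alpha,:]$, i.e. precisely when $M[\alpha\cup\{k\},:]$ fails to have full row-rank. By hypothesis this never happens, so every row of $B$ is nonzero.

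Next \ref{alpha3} $\Rightarrow$ \ref{alpha2}: from the block RREF $\begin{pmatrix} I & O \\ ? & B\end{pmatrix}$ with all rows of $B$ nonzero, the top block having $I_{|\alpha|}$ shows $M[\alpha,:]$ has full row-rank. For the support statement, for each $k \in [m]\setminus\alpha$ pick a column $j_k$ in which the $k$-th row of $B$ is nonzero; then the corresponding column of $M'$ (the full column, including the top part "?") has its $(|\alpha|+k)$-entry nonzero. Summing these columns with generic scalars — or invoking a standard fact that a matrix whose rows are all nonzero has a vector in its row space (equivalently, its transpose's column space) with full support, which follows because the set of coefficient vectors making some coordinate vanish is a finite union of proper subspaces and hence cannot cover $\mathbb{R}^{m-|\alpha|}$ — we obtain a vector in $\Col(B^{\text{ext}})$, where $B^{\text{ext}}$ is the bottom block $\begin{pmatrix} ? & B\end{pmatrix}$, whose lower part is fully supported. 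Lifting back through the RREF reduction (the RREF columns span $\Col(M')$), this produces $\bv \in \Col(M)$ with $\supp(\bv) = [m]\setminus\alpha$. Finally \ref{alpha2} $\Rightarrow$ \ref{alpha1}: given such $\bv \in \Col(M)$ with $\supp(\bv) = [m]\setminus\alpha$ and $M[\alpha,:]$ of full row-rank, fix $k \in [m]\setminus\alpha$. Since $\rank M[\alpha,:] = |\alpha|$, it suffices to show row $M[\{k\},:]$ is not in $\vspan$ of the rows of $M[\alpha,:]$. Write $\bv = M\bx$; if $M[\{k\},:] = \sum_{i\in\alpha} c_i M[\{i\},:]$ then pairing both sides with $\bx$ gives $v_k = \sum_{i\in\alpha} c_i v_i = 0$ because $v_i = 0$ for $i\in\alpha$, contradicting $k\in\supp(\bv)$. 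Hence $M[\alpha\cup\{k\},:]$ has full row-rank.

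The main obstacle, and the only place requiring genuine care rather than bookkeeping, is the full-support existence claim inside \ref{alpha3} $\Rightarrow$ \ref{alpha2}: producing a single vector in the column space whose support is all of $[m]\setminus\alpha$ at once. The clean way is the finite-union-of-proper-subspaces argument over the infinite field $\mathbb{R}$ (for each $k$, the set of $\bx$ with $(M\bx)_k = 0$ while maintaining $(M\bx)_i=0$ for $i\in\alpha$ is a proper subspace of the solution space of $\{(M\bx)_i = 0 : i\in\alpha\}$, since by \ref{alpha1}$\Leftrightarrow$\ref{alpha3} — which we've already set up — that coordinate is not identically zero on that subspace; a real vector space is not a finite union of proper subspaces). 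One should make sure the logical dependencies don't become circular: it is cleanest to prove \ref{alpha1} $\Leftrightarrow$ \ref{alpha3} first as a standalone equivalence (both directions are the RREF computation above), then use \ref{alpha1} freely when establishing \ref{alpha3} $\Rightarrow$ \ref{alpha2}, and close with \ref{alpha2} $\Rightarrow$ \ref{alpha1}. A minor secondary subtlety is the column-permutation issue in matching the literal block shape $\begin{pmatrix} I_{|\alpha|} & O \\ ? & B\end{pmatrix}$ to an actual RREF; I would dispose of this by remarking at the outset that reordering columns of $M$ affects none of the three conditions (it permutes supports and preserves all row-rank and column-space statements), so we may assume the pivot columns of $M[\alpha,:]$ are the first $|\alpha|$ columns.
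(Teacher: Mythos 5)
Your implications (1)$\Leftrightarrow$(3) in outline, (3)$\Rightarrow$(2), and (2)$\Rightarrow$(1) follow the same overall strategy as the paper (echelon-form bookkeeping, a generic linear combination to get full support, and a duality argument for (2)$\Rightarrow$(1), which the paper phrases via a left null vector $\bx\trans M=\bzero$ supported in $\alpha\cup\{k\}$). However, there is a genuine flaw in how you handle condition (3): you carry out the reduction by \emph{row} operations (Gaussian elimination, back-substitution, ``reduced modulo the row space''), whereas the block shape $\begin{pmatrix} I_{|\alpha|} & O \\ ? & B\end{pmatrix}$ and, more importantly, the use you make of it require \emph{column} operations. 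Concretely: (i) partial row reduction of the top block followed by your column permutation yields a matrix of shape $\begin{pmatrix} I & R \\ O & B\end{pmatrix}$, with a generally nonzero top-right block and zeros (not ``?'') in the bottom-left, so the column-permutation remark does not reconcile your form with the stated one; and (ii) the pivotal justification in your (3)$\Rightarrow$(2) step, ``the RREF columns span $\Col(M')$,'' is false for row operations: if $M'=\begin{pmatrix}0\\ 1\end{pmatrix}$ its RREF is $\begin{pmatrix}1\\ 0\end{pmatrix}$, and the column spaces differ. Since row operations mix rows, they also destroy exactly the information you need, namely the support pattern of vectors in the column space; so the ``lifting back'' step as written does not produce a vector in $\Col(M)$, let alone one supported on $[m]\setminus\alpha$.

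The fix is to use the \emph{column} reduced echelon form, which is what the paper actually does (its proof and the worked example speak of the column reduced echelon form; the word ``row'' in the lemma statement is a slip). Column operations replace $M'$ by $M'C$ with $C$ invertible, so each row stays in its place, row-rank statements about row subsets are preserved, and $\Col(M'C)=\Col(M')$. Then the top block $(I\mid O)$ certifies that $M[\alpha,:]$ has full row-rank, a row of $B$ vanishes exactly when the corresponding row of $M$ lies in the row space of $M[\alpha,:]$ (giving (1)$\Leftrightarrow$(3) honestly), and a generic combination of the columns in the right block --- your finite-union-of-proper-subspaces argument, which is fine and is also what the paper implicitly invokes --- has zero top part and nowhere-zero bottom part, hence lies in $\Col(M)$ with support exactly $[m]\setminus\alpha$; no lifting is needed. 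Your (2)$\Rightarrow$(1) argument (pairing a putative dependency of row $k$ on the $\alpha$-rows with $\bx$ where $\bv=M\bx$) is correct as stated and is an equivalent dual form of the paper's argument.
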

\begin{proof}
To prove that \ref{alpha1} implies \ref{alpha2}, suppose that for some set $\alpha \subsetneq [m]$, the matrix $M[\alpha\cup\{k\},:]$ has full row-rank for all $k\in [m]\setminus\alpha$.  This implies $\dim(\Col(M[\alpha\cup\{k\},:]))=|\alpha|+1$, and hence $\Col(M[\alpha\cup\{k\},:])=\mathbb{R}^{|\alpha|+1}. $ In particular, $\Col(M[\alpha\cup\{k\},:])$ contains the vector that vanishes on $\alpha$ and has the entry corresponding to $k$ equal to $1$.  Therefore, 
for each $k\in [m]\setminus\alpha$, we can find a vector $\bv_k \in \Col(M)$ satisfying $\bv_k[\alpha] = \bzero$ and $k$-th entry of $\bv_k$ is nonzero.   Hence, there exists a linear combination of such vectors
 \[\bv = \sum_{k\in [m]\setminus\alpha}c_k\bv_k \in \Col(M),\]
so that $\supp(\bv)=[m]\setminus \alpha$, which proves our claim.

Conversely, we suppose $\bv \in \Col(M)$ is a nonzero vector such that $[m]\setminus\supp(\bv) = \alpha$  and the matrix $M[\alpha,:]$ has full row-rank.  Let $k\in [m]\setminus\alpha = \supp(\bv)$ and  $\alpha' = \alpha\cup\{k\}$.  We will show $M' = M[\alpha',:]$ has full row-rank.  To see this, let $\bx\in\mathbb{R}^m$ be a vector that vanishes outside $\alpha'$ and $\bx\trans M = \bzero$.  Since $\bx\trans M = \bzero$ and $\bv\in\Col(M)$, we know $\bx\trans\bv = 0$.  Moreover, $\supp(\bx)\cap\supp(\bv)\subseteq \{k\}$ implies $\bx$ is zero at the $k$-th entry.  Therefore, $\bx$ vanishes outside $\alpha$, which leads to  $\bx = \bzero$ because $M[\alpha,:]$ has full row-rank.  In summary, $\bx = \bzero$ is the only vector in $\mathbb{R}^m$ that vanishes outside $\alpha'$, and satisfies $\bx\trans M = \bzero$, so $M'$ has full row-rank.  

  Next we will show that \ref{alpha1} $\implies$ \ref{alpha3} $\implies$ \ref{alpha2}.  Suppose $\alpha\subsetneq [m]$ is a set such that $M[\alpha\cup\{k\},:]$ has full row-rank for all $k\in [m]\setminus \alpha$.  Let $W$ be the matrix obtained from $M$ by permuting all rows in $\alpha$ to the top.  Recall that column operations do not change the linear dependency of rows.  Since $M[\alpha,:]$ has full row-rank, the first $|\alpha|$ rows of $W$ form a linearly independent set, and its column reduced echelon form has the form 
\[\begin{pmatrix}
 I_{|\alpha|} & O \\
 ? & B
\end{pmatrix}.\]
Moreover, since for $k\in [m]\setminus\alpha$, the rows of $W$ corresponding to $\alpha\cup\{k\}$ are also independent, and the row of $B$ corresponding to $k$ is a nonzero vector.  This establishes (3).  With this property, one may find a nowhere zero vector in the column space of $B$, which means there is a vector in the column space of $W$ whose support is $[m]\setminus\alpha$.  Therefore, there is a nonzero vector $\bv\in\Col(M)$ with $\supp(\bv) = [m]\setminus\alpha$.  Again, by the column reduced echelon form of $W$, the rows of $W$ in $\alpha$ are independent, so are the rows of $M$ in $\alpha$.
\end{proof}

In our application of Lemma \ref{lem:mtxequiv} we will take $M$ to be the verification matrix $\Psi$. Together with Lemma \ref{lemma:liberation} this will allow us to prove Lemma~\ref{lem:liberation}. 
Statement \ref{alpha3} in  Lemma~\ref{lem:mtxequiv}  is not required for the proof of the theorem, but it does provide an algorithmic way to determine the liberation sets.

\begin{proposition}\label{prop:betaliberationeq}
Let $G$ be a graph, $A \in \mptn(G)$, and $\Psi$ the SSP (the SAP, respectively) verification matrix for $A$. For a nonempty set $\beta \subseteq E(\overline{G})$ the following statements are equivalent:
\begin{enumerate}[label={\rm(\arabic*)}]
\item\label{beta1} $\beta$ is an SSP (an SAP, respectively) liberation set of $A$.
\item\label{beta2} $\Psi[\alpha\cup\{e\},:]$ has full row-rank for all $e\in \beta$, where $\alpha = E(\overline{G})\setminus\beta$.
\item\label{beta3} There exists a nonzero vector $\bx \in \Col(\Psi)$ such that $\supp(\bx)=\beta$ and $A$ has the SSP (the SAP, respectively) with respect to $G+\beta$. 
\item\label{beta4} By permuting the rows of $\beta$ in $\Psi$ to the bottom, its column reduced echelon form has the form 
\[\begin{pmatrix}
 I_{|\alpha|} & O \\
 ? & B
\end{pmatrix}\]
such that each row of $B$ is a nonzero vector, where $\alpha = E(\overline{G})\setminus \beta$.  
\end{enumerate}
\end{proposition}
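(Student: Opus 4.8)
The plan is to obtain the entire chain of equivalences from the purely linear-algebraic Lemma~\ref{lem:mtxequiv}, applied to $M=\Psi$, together with the row-independence characterization of the extended strong properties recorded in Remark~\ref{rem:vermtx}. The first step is to fix a dictionary between the two settings. Index the rows of $\Psi$ by $E(\overline{G})$ and apply Lemma~\ref{lem:mtxequiv} with the lemma's distinguished index set taken to be our $\alpha = E(\overline{G})\setminus\beta$; since $\beta$ is nonempty this is a proper subset, so the lemma applies, and the lemma's ``$[m]\setminus\alpha$'' is precisely our $\beta$. The second ingredient is the translation, immediate from Remark~\ref{rem:vermtx}: for any $\beta'\subseteq\beta$ we have $E(\overline{G+\beta'}) = E(\overline{G})\setminus\beta'$, and $A$ has the SSP (the SAP, respectively) with respect to $G+\beta'$ if and only if $\Psi[E(\overline{G})\setminus\beta',:]$ has full row-rank.

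With this in hand, \ref{beta1} $\iff$ \ref{beta2} is direct: a set $\beta'\subset\beta$ with $|\beta'| = |\beta|-1$ is exactly $\beta\setminus\{e\}$ for some $e\in\beta$, in which case $E(\overline{G})\setminus\beta' = \alpha\cup\{e\}$, so the liberation condition ``$A$ has the SSP with respect to $G+\beta'$ for all such $\beta'$'' translates, via the dictionary, to ``$\Psi[\alpha\cup\{e\},:]$ has full row-rank for all $e\in\beta$'', which is \ref{beta2}. Note that \ref{beta2} is verbatim condition \ref{alpha1} of Lemma~\ref{lem:mtxequiv} for $M=\Psi$ with our $\alpha$.

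The remaining equivalences \ref{beta2} $\iff$ \ref{beta3} $\iff$ \ref{beta4} are then read off from Lemma~\ref{lem:mtxequiv}. Its condition \ref{alpha2} asserts the existence of a nonzero $\bx\in\Col(\Psi)$ with $\supp(\bx)=\beta$ together with the requirement that $\Psi[\alpha,:]$ have full row-rank; since $\alpha = E(\overline{G})\setminus\beta = E(\overline{G+\beta})$, the translation identifies this requirement with ``$A$ has the SSP (the SAP, respectively) with respect to $G+\beta$'', so \ref{alpha2} is precisely \ref{beta3}. Likewise, moving the rows indexed by $\alpha$ to the top (as in \ref{alpha3}) and moving the rows indexed by $\beta$ to the bottom (as in \ref{beta4}) organize $\Psi$ into the same block form; the column-reduced echelon form of that matrix, together with the condition that every row of the lower block $B$ be nonzero, is exactly the content of both \ref{alpha3} and \ref{beta4}, so these two statements coincide. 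Thus Lemma~\ref{lem:mtxequiv} gives \ref{beta2} $\iff$ \ref{beta3} $\iff$ \ref{beta4}, and with the previous paragraph all four statements are equivalent. The SAP case is handled identically with $\verA(A)$ in place of $\verS(A)$, since Remark~\ref{rem:vermtx} covers both.

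I do not anticipate a genuine obstacle here: the mathematical content is entirely contained in Lemma~\ref{lem:mtxequiv}, and the proof is bookkeeping. The one point that needs care is keeping the roles of $\alpha$ and $\beta$ straight --- in Lemma~\ref{lem:mtxequiv} the distinguished set $\alpha$ is the complement of the target support, whereas here $\beta$ is that support --- and consistently matching each graph $G+\beta'$ (resp.\ $G+\beta$) with the correct row subset $E(\overline{G})\setminus\beta'$ (resp.\ $\alpha$) of $\Psi$ when invoking Remark~\ref{rem:vermtx}.
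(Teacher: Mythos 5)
Your proposal is correct and follows essentially the same route as the paper: Remark~\ref{rem:vermtx} translates the extended SSP/SAP conditions into full row-rank conditions on row subsets of $\Psi$, which gives \ref{beta1}~$\iff$~\ref{beta2}, and Lemma~\ref{lem:mtxequiv} applied to $M=\Psi$ with $\alpha = E(\overline{G})\setminus\beta$ yields the equivalence of \ref{beta2}, \ref{beta3}, and \ref{beta4}. Your write-up simply makes the dictionary between the two index conventions more explicit than the paper does.
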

\begin{proof}
By definition and Remark~\ref{rem:vermtx}, $\beta$ is a liberation set of $A$ precisely when $\Psi[E(\overline{G})\setminus\beta',:]$ has full row-rank for all  $\beta' \subset \beta$ with $|\beta'|=|\beta|-1$.  By writing $E(\overline{G})\setminus\beta$ as $\alpha$, the set $E(\overline{G})\setminus\beta'$ can be written as $\alpha\cup\{e\}$ for $e\in\beta$, we deduce that \ref{beta1} and \ref{beta2} are equivalent.  With Lemma~\ref{lem:mtxequiv} and Remark~\ref{rem:vermtx}, the statements \ref{beta2}, \ref{beta3}, and \ref{beta4} are equivalent.
\end{proof}

\begin{proof}[Proof of Lemma~\ref{lem:liberation}]
\label{proof:liberation-lemma} According to the equivalence of \ref{beta1} and \ref{beta3} in Proposition~\ref{prop:betaliberationeq}, the claims in Lemma~\ref{lem:liberation} are equivalent to the ones in Lemma~\ref{lemma:liberation}.
\end{proof}

\begin{example}
\label{ex:k4k1}
Let $G$, $A$, $\Psi$, $\alpha$, and $\beta$ be as in Example~\ref{ex:libvec}, and recall that in the labeling of the rows of $\Psi$ the first two rows are labeled by the elements of $\alpha$. Let us have a look at all the equivalent conditions in Proposition~\ref{prop:betaliberationeq} for this example. 
The column reduced echelon form of $\Psi$ is 
\[\Psi' = \left(\begin{array}{cc|cccccccc}
1 & 0 & 0 & 0 & 0 & 0 & 0 & 0 & 0 & 0 \\
0 & 1 & 0 & 0 & 0 & 0 & 0 & 0 & 0 & 0 \\
\hline
0 & 0 & 1 & 0 & 0 & 0 & 0 & 0 & 0 & 0 \\
-1 & -1 & -1 & 0 & 0 & 0 & 0 & 0 & 0 & 0
\end{array}\right) = \begin{pmatrix}
 I_2 & O \\
 ? & B
\end{pmatrix}.\]
Since $B$ has no zero rows $\Psi$ satisfies the condition (4) in Proposition~\ref{prop:betaliberationeq}, for our chosen $\alpha$ and $\beta$. Condition (3) holds for example for  $\bx = (0,0,1,-1)\trans \in \Col(\Psi) = \Col(\Psi')$, by noting that $\supp(\bx) = \beta$ and $A$ has the SSP with respect to $G + \beta$. Since column operations do not change the dependency of rows, we can easily deduce from $\Psi'$ that both $\Psi[\alpha \cup \{\{3,5\}\},:]$ and $\Psi[\alpha \cup \{\{4,5\}\},:]$ have full row-rank, hence conditions (1) and (2) hold.

As a consequence of all these equivalent conditions, there is a matrix $A'\in\mptn(G + \beta)$ with $\spec(A') = \{0^{(3)}, 4^{(2)}\}$.  
\end{example}

The matrix $A$ in the example above is small enough that we were able to write out the SSP verification matrix, and investigate all the equivalent properties of Proposition \ref{prop:betaliberationeq}. However, the final conclusion on the existence of the matrix $A'$ is not very exciting. Our next example is only slightly more involved, but it already resolves an open question in the IEP-$G$.

\begin{example}\label{ex:g151}
Consider a family of matrices of the form:
$$A=\left(
\begin{array}{cccccc}
 b & t & t & t & 0 & 0 \\
 t & 0 & 0 & 0 & 0 & 0 \\
 t & 0 & 0 & 0 & 0 & 0 \\
 t & 0 & 0 & 0 & 0 & 0 \\
 0 & 0 & 0 & 0 & a & a\\
 0 & 0 & 0 & 0 & a & a \\
\end{array}
\right) \in \mptn(K_{1,3}\dunion K_{2}),$$
where $a,b,t \in \mathbb{R}\setminus\{0\}$. With the observation that the matrix $A$ has eigenvalues equal to $\{0^{(3)}, 2a, \frac{1}{2} (b - \sqrt{b^2 + 12 t^2}), \frac{1}{2} (b + \sqrt{b^2 + 12 t^2})\}$, it is straightforward to check that this family of matrices together with their translations realizes the ordered multiplicity lists $(1,3,2)$, $(2,3,1)$, $(1,1,3,1)$ and $(1,3,1,1)$ spectrally arbitrary.  
We claim that $\beta=\{\{3,5\},\{4,5\},\{2,6\},\{4,6\}\}$ is a liberation set of all matrices in this family. Using symmetries of $(K_{1,3}\dunion K_{2}) + \beta$ it is enough to check that $A$ has the SSP with respect to  $\beta_1=\{\{3,5\},\{4,5\},\{4,6\}\}$ and $\beta_2=\{\{3,5\},\{4,5\},\{2,6\}\}$, which is a straightforward (albeit tedious) calculation exercise. Also note that $\beta$ is not a liberation set of every matrix in  $\mptn(K_{1,3}\dunion K_{2})$.
We conclude that the multiplicity lists listed above can be realized spectrally arbitrarily for $\mathsf{G_{151}}=(K_{1,3}\dunion K_{2})+\beta$, shown on Figure~\ref{fig:g151}. 

\begin{figure}[h]
\centering
\begin{tikzpicture}
\foreach \i in {1,2} {
    \pgfmathsetmacro{\angle}{60 * (\i)}
    \node[label={\angle:$\i$}] (\i) at (\angle:1) {};
 }
\node[label={0:3}] (3) at (0:1) {};
\node[label={-60:4}] (4) at (-60:1) {};
\node[label={-120:5}] (5) at (-120:1) {};
\node[label={180:6}] (6) at (180:1) {};
\draw (3)--(1) -- (2);
\draw (1) -- (4);
\draw (5) -- (6);
\draw[color=blue,thick,dashed] (3) -- (5) -- (4);
\draw[color=blue,thick,dashed] (2) -- (6) -- (4);
\node[rectangle,draw=none] at (0,-1.5) {$\mathsf{G_{151}}$};
\end{tikzpicture}
\caption{Graph $\mathsf{G_{151}}=(K_{1,3}\dunion K_{2})+\beta$ on six vertices with $\beta = \{\{3,5\},\{4,5\},\{2,6\},\{4,6\}\}$.}
\label{fig:g151}
\end{figure}
\end{example}

\section{Direct sum of matrices with the SSP}
\label{sec:directsum}

As an application of Lemma~\ref{lem:liberation}, we consider matrices of the form $A \oplus B$ with $A \in \mptn(G)$ and $B \in \mptn(H)$.

\begin{definition}
Let $G$ and $H$ be graphs on $m$ and $n$ vertices, respectively, $\beta$ a set of edges between $G$ and $H$, and $Y$ an $m \times n$ matrix. We say that \emph{ $Y$ vanishes on $\beta$}, if under the labeling of the rows of $Y$ by $V(G)$ and the columns  of $Y$ by $V(H)$ we have $Y_{u,v}=0$ for all $\{u,v\} \in \beta$. We denote this by $Y\vert_{\beta}=0$.
\end{definition}

\begin{proposition}\label{prop:directsxp}
Let $G$ and $H$ be graphs on $m$ and $n$ vertices, respectively, $A\in\mptn(G)$, $B\in\mptn(H)$, and $\beta$ a set of edges between $G$ and $H$.
 \begin{enumerate}
     \item Assume that $A$ and $B$ have the SSP, and that  $Y = O$ is the only matrix in $ \mathbb{R}^{m \times n}$ with $Y\vert_{\beta}=0$ that satisfies $AY - YB = O$. Then $A\oplus B\in\mptn(G\dunion H)$ has the SSP with respect to $(G\dunion H) + \beta$.
      \item Assume that $A$ and $B$ have the SAP, and that  $Y = O$ is the only matrix in $ \mathbb{R}^{m \times n}$ with $Y\vert_{\beta}=0$, that satisfies $AY = YB = O$. Then $A\oplus B\in\mptn(G\dunion H)$ has the SAP with respect to $(G\dunion H) + \beta$.
     \end{enumerate}
\end{proposition}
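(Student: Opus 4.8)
The plan is to work directly from the definition of ``the SSP (the SAP) with respect to a graph'' (Definition~\ref{def:SSPwrt}), exploiting the $2\times 2$ block structure that the direct sum $M = A\oplus B$ imposes on everything in sight. Write $M = A\oplus B\in\mptn(G\dunion H)$. First I would observe that the non-edges of $G\dunion H + \beta$ are exactly the non-edges within $G$, the non-edges within $H$, and those edges between $V(G)$ and $V(H)$ that do not lie in $\beta$. Hence any symmetric matrix $X\in\mptnclo(\overline{G\dunion H+\beta})$ has the block form
\[
X = \begin{pmatrix} X_{11} & Y \\ Y\trans & X_{22} \end{pmatrix},
\]
with $X_{11}\in\mptnclo(\overline{G})$, $X_{22}\in\mptnclo(\overline{H})$, and $Y\in\mathbb{R}^{m\times n}$ satisfying $Y\vert_\beta = 0$; conversely, every such triple $(X_{11},X_{22},Y)$ yields an admissible $X$. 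So the ``only $X$'' statement we must prove is equivalent to: the only admissible triple satisfying the relevant matrix equation is $(O,O,O)$.

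\textbf{Block computation.} Next I would expand the defining equation blockwise. For part (1), using $M = A\oplus B$ and the symmetry of $A$ and $B$, the equation $[M,X]=O$ is equivalent to the four block identities $[A,X_{11}]=O$, $[B,X_{22}]=O$, $AY-YB=O$, and $Y\trans A - BY\trans = O$; since the last equation is the transpose of the third (up to sign), the system collapses to $[A,X_{11}]=O$, $[B,X_{22}]=O$, and $AY-YB=O$. For part (2), the equation $MX=O$ splits as $AX_{11}=O$, $BX_{22}=O$, $AY=O$, and $BY\trans=O$, the last being equivalent to $YB=O$ because $B$ is symmetric.

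\textbf{Conclusion.} Then I would finish by invoking the hypotheses one block at a time. Since $A$ has the SSP (the SAP) and $X_{11}\in\mptnclo(\overline{G})$ satisfies the corresponding diagonal-block equation, $X_{11}=O$ by Remark~\ref{rem:vermtx}; the same argument with $B$ gives $X_{22}=O$. Finally $Y$ vanishes on $\beta$ and satisfies $AY-YB=O$ in case (1), or $AY=YB=O$ in case (2), so the assumed uniqueness forces $Y=O$. Therefore $X=O$, which is precisely the assertion that $A\oplus B$ has the SSP (the SAP) with respect to $G\dunion H+\beta$.

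\textbf{Expected difficulty.} There is no real obstacle here; the argument is bookkeeping. The only point that warrants care is the first step: verifying that the combinatorial constraint $X\in\mptnclo(\overline{G\dunion H+\beta})$ translates \emph{exactly} into $X_{11}\in\mptnclo(\overline{G})$, $X_{22}\in\mptnclo(\overline{H})$, and $Y\vert_\beta=0$, and dually that admissibility is preserved when one reassembles the blocks. Once that correspondence is pinned down, the block expansion of $[M,X]$ and of $MX$ and the two applications of the strong property of $A$ and of $B$ are routine.
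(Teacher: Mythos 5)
Your proof is correct and follows essentially the same route as the paper: write $X\in\mptnclo(\overline{G\dunion H+\beta})$ in block form, use the SSP (respectively SAP) of $A$ and $B$ to kill the diagonal blocks, and let the hypothesis on $Y$ finish the job. The only cosmetic remark is that annihilating $X_{11}$ and $X_{22}$ follows directly from Definition~\ref{def:SSPwrt} (with $H=G$), rather than from Remark~\ref{rem:vermtx} about verification matrices; otherwise your argument matches the paper's.
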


\begin{proof}
Let 
\[X = \begin{pmatrix}
 X_A & Y \\
 Y\trans & X_B
\end{pmatrix}\]
be a symmetric matrix such that $X \in \mptnclo(\overline{G\dunion H + \beta})$.
To prove the first item we assume that  $[A\oplus B, X] = O$. From the SSP condition on $A$ and $B$ this reduces to $AY-YB=O$, and the conclusion follows. 
Similarly, to prove the second item, we assume $(A\oplus B)X = O$. The SAP assumption for $A$ and $B$ reduces this equality to $AY=YB= O$, which implies $Y=O$ and the second statement is proved. 
\end{proof}

The following standard linear algebra result will help us to understand the set of solutions $Y$ to the equation $AY - YB = O$. The proof is added for completeness.
 For any two  matrices $A\in\mat[m]$ and $B\in\mat[n]$ we denote by $\mathcal{R}(A,B)$   the set of solutions  $Y\in \mat[m,n]$ of the equation $AY - YB = O$. The following proposition describes $\mathcal{R}(A,B)$.
 
\begin{proposition}
\label{prop:twist}
Let $A$ and $B$ be symmetric matrices of order $m$ and $n$, respectively.  Suppose $A$ and $B$ have $k$ distinct common eigenvalues $\lambda_1,\ldots,\lambda_k$ with $\mult_A(\lambda_i) = a_i$ and $\mult_B(\lambda_i) = b_i$ for $i = 1,\ldots, k$.  Then 
\[\mathcal{R}(A,B) = \vspan\{\bu\bv\trans: A\bu = \lambda_i\bu,\ B\bv = \lambda_i\bv \text{ for some }i \in [k]\},\]
and has the dimension $\sum_{i\in [k]} a_ib_i$.  
\end{proposition}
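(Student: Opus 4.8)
The plan is to diagonalize both matrices simultaneously in a single common orthonormal basis and reduce the Sylvester-type equation $AY - YB = O$ to a decoupled system of scalar conditions on the entries of the transformed matrix. Since $A$ and $B$ are symmetric, write $A = P D_A P\trans$ and $B = Q D_B Q\trans$ with $P, Q$ orthogonal and $D_A, D_B$ diagonal. Setting $Z := P\trans Y Q$, the equation $AY - YB = O$ becomes $D_A Z - Z D_B = O$, i.e. $(\mu_i - \nu_j) Z_{i,j} = 0$ for all $i \in [m]$, $j \in [n]$, where $\mu_i$ ranges over the eigenvalues of $A$ (with multiplicity) and $\nu_j$ over those of $B$. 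Hence $Z_{i,j} = 0$ unless $\mu_i = \nu_j$, so $Z$ is free exactly on those positions $(i,j)$ where the $i$-th eigenvalue of $A$ equals the $j$-th eigenvalue of $B$; these positions are precisely the ones lying in the blocks indexed by a common eigenvalue $\lambda_\ell$, of which there are $\sum_{\ell \in [k]} a_\ell b_\ell$.

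From here I would translate back. The solution space $\mathcal{R}(A,B) = \{PZQ\trans : Z \text{ supported on the common-eigenvalue positions}\}$ has dimension $\sum_{\ell\in[k]} a_\ell b_\ell$ since $Y \mapsto P\trans Y Q$ is a linear isomorphism. For the spanning-set description, observe that a rank-one matrix $\be_i \be_j\trans$ (supported at a single common-eigenvalue position $(i,j)$, say with $\mu_i = \nu_j = \lambda_\ell$) transforms to $P\be_i (Q\be_j)\trans = \bu\bv\trans$, where $\bu = P\be_i$ is a unit eigenvector of $A$ for $\lambda_\ell$ and $\bv = Q\be_j$ is a unit eigenvector of $B$ for $\lambda_\ell$. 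These rank-one matrices, as $(i,j)$ ranges over all common-eigenvalue positions, form a basis of $\mathcal{R}(A,B)$, and each is of the form $\bu\bv\trans$ with $A\bu = \lambda_\ell \bu$, $B\bv = \lambda_\ell\bv$. Conversely, any $\bu\bv\trans$ with $A\bu = \lambda_i\bu$ and $B\bv = \lambda_i\bv$ satisfies $A(\bu\bv\trans) = \lambda_i \bu\bv\trans = (\bu\bv\trans)B$, so it lies in $\mathcal{R}(A,B)$; this gives the reverse inclusion $\vspan\{\bu\bv\trans : \ldots\} \subseteq \mathcal{R}(A,B)$, and combined with the basis observation we get equality.

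There is no serious obstacle here — this is the standard argument that the Sylvester operator $Y \mapsto AY - YB$ has eigenvalues $\mu_i - \nu_j$ — so the only thing to be a little careful about is bookkeeping: making sure the eigenvector spans for a fixed $\lambda_\ell$ are described correctly (the full $\lambda_\ell$-eigenspace of $A$ has dimension $a_\ell$, of $B$ dimension $b_\ell$, and their tensor product has dimension $a_\ell b_\ell$), and that the span in the statement, which ranges over all eigenvector pairs rather than just a basis, indeed collapses to exactly $\mathcal{R}(A,B)$ rather than something larger. I would present the proof in two short paragraphs: first the simultaneous-diagonalization reduction and the dimension count, then the identification of the rank-one building blocks with the claimed eigenvector products and the two inclusions.
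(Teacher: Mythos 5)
Your proposal is correct and follows essentially the same route as the paper: orthogonally diagonalize $A$ and $B$, reduce $AY-YB=O$ to the diagonal Sylvester condition $(\mu_i-\nu_j)Z_{i,j}=0$, and transport the obvious basis $\{\be_i\bff_j\trans\}$ back via $Y\mapsto Q_A Z Q_B\trans$ to get the rank-one eigenvector products and the dimension count $\sum_i a_ib_i$. Your extra check of the reverse inclusion (that every eigenvector pair, not just the basis ones, yields an element of $\mathcal{R}(A,B)$) is a harmless refinement of the same argument.
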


\begin{proof}
Let $Q_A$ and $Q_B$ be orthogonal matrices that diagonalize $A$ and $B$: $Q_A\trans AQ_A = D_A$ and $Q_B\trans BQ_B = D_B$.  Then the equation $AY - YB = O$ is equivalent to $D_AY' - Y'D_B = O$, where $Y' = Q_A\trans YQ_B$.  We deduce that the $(i,j)$-entry of $Y'$ has to be zero if the $(i,i)$-entry $(D_A)_{i,i}$ of $D_A$ is different from the $(j,j)$-entry $(D_B)_{j,j}$ of $D_B$.  Let $\be_i$ and $\bff_i$ denote the $i$-th columns of $I_m$ and $I_n$, respectively.  With this notation we have  
\[\mathcal{R}(D_A,D_B)=\vspan\{\be_i\bff_j\trans : (D_A)_{i,i} = (D_B)_{j,j}\}.\]
Since $\mathcal{R}(A,B)=Q_A\mathcal{R}(D_A,D_B)Q_B\trans$, the statement follows.
\end{proof}

 If both $A$ and $B$ have the SSP and they do not have any common eigenvalues, then $A\oplus B$ has the SSP, see Theorem~\ref{thm:supergraph}. However, if $A$ and $B$ share one distinct eigenvalue, this no longer holds, and we aim to understand liberation sets for $A \oplus B$ in this case. To this end we will impose some conditions on the eigenspaces of $A$ and $B$.
 
\begin{definition}
\label{def:generic}
Let $\mathcal{W}$ be a $d$-dimensional subspace of $\mathbb{R}^n$ and $W$ be an $n\times d$ matrix whose columns form a basis of $\mathcal{W}$. Then $\mathcal{W}$ is said to be \emph{generic} if and only if every $d\times d$ submatrix of $W$ is invertible.
\end{definition}

The generic subspace is well-defined above, as it does not depend on the choice of $W$. Indeed, if $W_1$ and $W_2$ are $n \times d$ matrices and columns of $W_1$ and columns of $W_2$ form bases of $\mathcal{W}$, then $W_2 = W_1Q$ for some invertible  $Q\in \mat[d]$.  Thus, a $d\times d$ submatrix $B$ in $W_1$ is invertible if and only if the corresponding submatrix $BQ$ in $W_2$ is invertible.  As an example, when $\mathcal{W}$ is a $1$-dimensional subspace of $\mathbb{R}^n$, then it is generic if and only if it is spanned by a nowhere zero vector.

\begin{theorem}
\label{thm:multi-multi-eigenvalue-in-common}
Let $A\in \mptn(G)$ and $B \in \mptn(H)$ have the SSP.  Suppose $\spec(A)\cap \spec(B)=\{\lambda\}$ with $\mult_A(\lambda)=k$ and $\mult_B(\lambda)=\ell$, where $\ker(A - \lambda I)$ and $\ker(B - \lambda I)$ are generic. 
Then for any $V_G \subseteq V(G)$ and $V_H\subseteq V(H)$ with
\begin{itemize}
\item $|V_G|=k$ and $|V_H|=\ell+1$ or 
\item $|V_G|=k+1$ and $|V_H|=\ell$,
\end{itemize}
the set 
\[\beta = \{\{u,v\}: u\in V_G \text{ and }v\in V_H\}\]
is an SSP liberation set of $A\oplus B$.  In particular, there exists a matrix $C\in S(G\dunion H + \beta)$ with the SSP such that  $\spec(C) = \spec(A)\cup \spec(B)$.
\end{theorem}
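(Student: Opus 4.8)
\medskip
\noindent\emph{Proof strategy.} The plan is to verify directly that $\beta$ satisfies Definition~\ref{def:liberation} and then obtain the ``in particular'' clause from Lemma~\ref{lem:liberation}. Every pair in $\beta$ joins a vertex of $G$ to a vertex of $H$, so $\beta$ is a nonempty subset of $E(\overline{G\dunion H})$ and the definition applies. I would therefore fix an arbitrary $\beta'\subset\beta$ with $|\beta'|=|\beta|-1$, write it as $\beta'=\beta\setminus\{\{u_0,v_0\}\}$ with $u_0\in V_G$ and $v_0\in V_H$, and show that $A\oplus B$ has the SSP with respect to $(G\dunion H)+\beta'$. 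Since $A$ and $B$ have the SSP, Proposition~\ref{prop:directsxp}(1) reduces this to proving that $Y=O$ is the only matrix $Y\in\mathbb{R}^{m\times n}$ with $AY-YB=O$ and $Y\vert_{\beta'}=0$.

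To describe $\mathcal{R}(A,B)$, let $W_A$ be an $m\times k$ matrix whose columns form a basis of $\ker(A-\lambda I)$ and $W_B$ an $n\times\ell$ matrix whose columns form a basis of $\ker(B-\lambda I)$. Since $\lambda$ is the only common eigenvalue, Proposition~\ref{prop:twist} shows that every $Y\in\mathcal{R}(A,B)$ can be written as $Y=W_AZW_B\trans$ for some $Z\in\mat[k,\ell]$, and $Y=O$ exactly when $Z=O$ because $W_A$ and $W_B$ have full column rank. Denoting by $\mathbf{w}_u\trans$ the row of $W_A$ indexed by $u\in V(G)$ and by $\mathbf{x}_v\trans$ the row of $W_B$ indexed by $v\in V(H)$, we have $Y_{u,v}=\mathbf{w}_u\trans Z\mathbf{x}_v$, so the condition $Y\vert_{\beta'}=0$ is exactly $\mathbf{w}_u\trans Z\mathbf{x}_v=0$ for all $(u,v)\in V_G\times V_H$ with $(u,v)\neq(u_0,v_0)$.

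The core step uses the genericity hypothesis (Definition~\ref{def:generic}). Because $\ker(A-\lambda I)$ is generic and $|V_G|=k$, the $k\times k$ submatrix $W_A[V_G,:]$ is invertible, so $\{\mathbf{w}_u:u\in V_G\}$ is a basis of $\mathbb{R}^k$; because $\ker(B-\lambda I)$ is generic, every $\ell$-element subset of $\{\mathbf{x}_v:v\in V_H\}$ is a basis of $\mathbb{R}^\ell$, and since $|V_H|=\ell+1$ the whole family spans $\mathbb{R}^\ell$. For $u\in V_G\setminus\{u_0\}$ the row vector $\mathbf{w}_u\trans Z$ annihilates $\mathbf{x}_v$ for every $v\in V_H$, hence $\mathbf{w}_u\trans Z=\bzero\trans$; for $u=u_0$ the row vector $\mathbf{w}_{u_0}\trans Z$ annihilates the $\ell$ vectors $\{\mathbf{x}_v:v\in V_H\setminus\{v_0\}\}$, still a basis of $\mathbb{R}^\ell$, so again $\mathbf{w}_{u_0}\trans Z=\bzero\trans$. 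Thus $\mathbf{w}_u\trans Z=\bzero\trans$ for all $u\in V_G$, and since $\{\mathbf{w}_u:u\in V_G\}$ spans $\mathbb{R}^k$ this forces $Z=O$, hence $Y=O$. As $\beta'$ was arbitrary, $\beta$ is an SSP liberation set of $A\oplus B$, and Lemma~\ref{lem:liberation} yields $C\in\mptn((G\dunion H)+\beta)$ with the SSP and $\spec(C)=\spec(A\oplus B)=\spec(A)\cup\spec(B)$. The remaining case $|V_G|=k+1$, $|V_H|=\ell$ follows by the identical argument applied to $Y\trans\in\mathcal{R}(B,A)$, i.e.\ by interchanging the roles of $A$ and $B$.

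I expect the one delicate point to be the bookkeeping around the single ``liberated'' pair $\{u_0,v_0\}$: the hypothesis supplies one vertex more than the relevant multiplicity (an extra vertex in $V_H$, or symmetrically in $V_G$), and this is precisely the surplus needed so that, after the constraint through $(u_0,v_0)$ is dropped, enough independent coordinates of the $\lambda$-eigenvectors survive to annihilate $Z$. Everything else is routine: the reduction via Proposition~\ref{prop:directsxp} from ``SSP with respect to a supergraph'' to the matrix equation $AY-YB=O$, and the parametrization of its solution space by Proposition~\ref{prop:twist}.
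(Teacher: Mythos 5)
Your proposal is correct and follows essentially the same route as the paper: reduce via Proposition~\ref{prop:directsxp} to the equation $AY-YB=O$ with $Y\vert_{\beta'}=0$, parametrize solutions by Proposition~\ref{prop:twist}, and let genericity force the coefficient matrix $Z$ to vanish, concluding with Lemma~\ref{lem:liberation}. The only cosmetic difference is that the paper observes $\beta'$ always contains a full $k\times\ell$ grid $\mathcal{I}\times\mathcal{J}$ and kills $S_Y$ in one step via the invertibility of $U_A[\mathcal{I},:]$ and $U_B[\mathcal{J},:]$, whereas you argue row-by-row and handle the second case by transposing; both are sound.
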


\begin{proof}
Let $U_A$ and $U_B$ be $|V(G)|\times k$ and $|V(H)|\times \ell$  matrices whose columns form bases of $\ker(A - \lambda I)$ and $\ker(B - \lambda I)$, respectively.  Let $\beta \subset V(G) \times V(H)$ be as in the statement of the theorem, and $\beta'\subset\beta$ with $|\beta'| = |\beta| - 1$. Observe that by the assumption on $V_G$ and $V_H$, $\beta'$ contains a $k\times \ell$ grid, say $\mathcal{I}\times\mathcal{J}$, regardless the choice of $\beta'$. 

By Proposition~\ref{prop:twist}, 
\[\mathcal{R}(A,B) = \{U_ASU_B\trans \colon S\in\mat[k,\ell]\}.\]
Suppose $Y 
\in \mathcal{R}(A,B)$ satisfies $Y\vert_{\mathcal{I} \times \mathcal{J}}=0$. Then $Y = U_AS_YU_B$ for some $S_Y\in\mat[k,\ell]$, and  
\[U_A[\mathcal{I},:] S_Y U_B[\mathcal{J},:]\trans = O.\]
Since both $\ker(A - \lambda I)$ and $\ker(B - \lambda I)$ are assumed to be generic, $U_A[\mathcal{I},:]$ and $U_B[\mathcal{J},:]$ are invertible, so $S_Y = O$. This implies $Y = O$ and proves that $A\oplus B$ has the SSP with respect to $G\dunion H + \beta'$ by Proposition~\ref{prop:directsxp}. This also proves that $\beta$ is an SSP liberation set of $A\oplus B$,  since our choice of $\beta'$ was arbitrary. The conclusion follows by Lemma~\ref{lem:liberation}. 
\end{proof}

When applying Theorem \ref{thm:multi-multi-eigenvalue-in-common}, the assumption on genericity of eigenspaces can be hard to prove. However, this  condition is straightforward to check for specific matrices of small dimension. 

\begin{example}
Let $A\in\mptn(C_6)$ and $B\in\mptn(C_8)$ be 
\[
\begin{pmatrix}
 0 & 1 & 0 & 0 & 0 & -1 \\
 1 & 0 & 1 & 0 & 0 & 0 \\
 0 & 1 & 0 & 1 & 0 & 0 \\
 0 & 0 & 1 & 0 & 1 & 0 \\
 0 & 0 & 0 & 1 & 0 & 1 \\
 -1 & 0 & 0 & 0 & 1 & 0 \\
\end{pmatrix} 
\text{ and }
\begin{pmatrix}
0 & 1 & 0 & 0 & 0 & 0 & 0 & -\sqrt{\frac{5}{3}} \\
 1 & 0 & 1 & 0 & 0 & 0 & 0 & 0 \\
 0 & 1 & 0 & 1 & 0 & 0 & 0 & 0 \\
 0 & 0 & 1 & 0 & \sqrt{\frac{5}{3}} & 0 & 0 & 0 \\
 0 & 0 & 0 & \sqrt{\frac{5}{3}} & 0 & 1 & 0 & 0 \\
 0 & 0 & 0 & 0 & 1 & 0 & 1 & 0 \\
 0 & 0 & 0 & 0 & 0 & 1 & 0 & 1 \\
 -\sqrt{\frac{5}{3}} & 0 & 0 & 0 & 0 & 0 & 1 & 0 \\
\end{pmatrix},
\]
respectively.  
Then 
\[\begin{aligned}
 \spec(A) &= \left\{-\sqrt{3}^{(2)}, 0^{(2)}, \sqrt{3}^{(2)}\right\} \text{ and} \\
 \spec(B) &= \left\{-2^{(2)}, -\sqrt{\frac{2}{3}}^{(2)}, \sqrt{\frac{2}{3}}^{(2)}, 2^{(2)}\right\}.
\end{aligned}\]
Name the distinct eigenvalues of $A$ and $B$ as $\lambda_1 < \lambda_2 < \lambda_3$ and $\mu_1 < \cdots < \mu_4$, respectively.  By direct computation, every eigenspace of $A$ and $B$ is generic except for $\ker(A)$, the eigenspace of $A$ with respect to $\lambda_2 = 0$.  The eigenspaces of $A$ and $A + sI$ are the same for any $s$. Hence, by choosing $s = -\lambda_1 + \mu_1$, the matrices $A + sI$ and $B$ have a unique common eigenvalues $\mu_1$ and meet the requirement in Theorem~\ref{thm:multi-multi-eigenvalue-in-common}.  Thus, we may choose a graph $H$ obtained from $C_6 \dunion C_8$ by joining two vertices in $V(C_6)$ to three vertices in $V(C_8)$ or three vertices in $V(C_6)$ to two vertices in $V(C_8)$. (See some of examples of such graphs in Figure~\ref{fig:C6C8}.)  Then Theorem~\ref{thm:multi-multi-eigenvalue-in-common} guarantees a matrix $M\in\mptn(H)$ with the SSP and the ordered multiplicity list $(4,2,2,2,2,2)$.  Indeed, by choosing $s$ as $-\lambda_1 + \mu_2$, $-\lambda_1 + \mu_3$, $-\lambda_3 + \mu_1$, $-\lambda_3 + \mu_1$, or $-\lambda_3 + \mu_1$, every ordered multiplicity list with the unordered multiplicity list $\{4,2,2,2,2,2\}$ is realizable by a matrix in $\mptn(H)$ with the SSP.

\begin{figure}[h]
\centering
\begin{tikzpicture}
\foreach \i in {1,...,6} {
    \pgfmathsetmacro{\angle}{60 * (\i - 1)}
    \node[fill=white] (\i) at (\angle:0.5) {};
}
\foreach \i in {7,...,14} {
    \pgfmathsetmacro{\angle}{45 * (\i-7)}
    \node[fill=white] (\i) at (\angle:1) {};
 }
\draw (1) -- (2) -- (3) -- (4) -- (5) -- (6) -- (1);
\draw (7) -- (8) -- (9) -- (10) -- (11) -- (12) -- (13) -- (14) -- (7); 
\foreach \i in {2,3,8,9,10} {
    \node [fill=blue,thick,dashed] at (\i) {};
}
\foreach \i in {2,3} {
    \foreach \j in {8,9,10} {
        \draw[blue, thick,dashed] (\i) -- (\j);
    }
}
\end{tikzpicture}
\hfil
\begin{tikzpicture}
\foreach \i in {1,...,6} {
    \pgfmathsetmacro{\angle}{60 * (\i - 1)}
    \node[fill=white] (\i) at (\angle:0.5) {};
}
\foreach \i in {7,...,14} {
    \pgfmathsetmacro{\angle}{45 * (\i-7)}
    \node[fill=white] (6+\i) at (\angle:1) {};
}
\draw (1) -- (2) -- (3) -- (4) -- (5) -- (6) -- (1);
\draw (7) -- (8) -- (9) -- (10) -- (11) -- (12) -- (13) -- (14) -- (7); 
\foreach \i in {2,3,8,10,13} {
    \node [fill=blue,thick,dashed] at (\i) {};
}
\foreach \i in {2,3} {
    \foreach \j in {8,10,13} {
        \draw[blue, thick,dashed] (\i) -- (\j);
    }
}
\end{tikzpicture}
\hfil
\begin{tikzpicture}
\foreach \i in {1,...,6} {
    \pgfmathsetmacro{\angle}{60 * (\i - 1)}
    \node[fill=white] (\i) at (\angle:0.5) {};
}
\foreach \i in {7,...,14} {
    \pgfmathsetmacro{\angle}{45 * (\i-7)}
    \node[fill=white] (\i) at (\angle:1) {};
}
\draw (1) -- (2) -- (3) -- (4) -- (5) -- (6) -- (1);
\draw (7) -- (8) -- (9) -- (10) -- (11) -- (12) -- (13) -- (14) -- (7); 
\foreach \i in {2,3,6,9,13} {
    \node [fill=blue,thick,dashed] at (\i) {};
}
\foreach \i in {2,3,6} {
    \foreach \j in {9,13} {
        \draw[blue, thick,dashed] (\i) -- (\j);
    }
}
\end{tikzpicture}
\hfil
\begin{tikzpicture}
\foreach \i in {1,...,6} {
    \pgfmathsetmacro{\angle}{60 * (\i - 1)}
    \node[fill=white] (\i) at (\angle:0.5) {};
}
\foreach \i in {7,...,14} {
    \pgfmathsetmacro{\angle}{45 * (\i-7)}
    \node[fill=white] (\i) at (\angle:1) {};
}
\draw (1) -- (2) -- (3) -- (4) -- (5) -- (6) -- (1);
\draw (7) -- (8) -- (9) -- (10) -- (11) -- (12) -- (13) -- (14) -- (7); 
\foreach \i in {2,3,6,8,9} {
    \node [fill=blue,thick,dashed] at (\i) {};
}
\foreach \i in {2,3,6} {
    \foreach \j in {8,9} {
        \draw[blue, thick,dashed] (\i) -- (\j);
    }
}

\end{tikzpicture}
\caption{All the graphs in this figure have $14$ vertices, $20$ edges, and every ordered multiplicity list corresponding the unordered multiplicity list $\{4,2,2,2,2,2\}$ is realizable by a matrix with the SSP.
}
\label{fig:C6C8}
\end{figure}
\end{example}

All the liberation sets identified in Theorem~\ref{thm:multi-multi-eigenvalue-in-common} are rectangular grids.  In the case of $\ell = 1$, we can do better, as we show in the next theorem.

\begin{theorem}\label{thm:multi-1-eigenvalue-in-common}
Let $A\in \mptn(G)$ and $B \in \mptn(H)$ have the SSP.  Suppose $\spec(A)\cap \spec(B)=\{\lambda\}$ with $\mult_A(\lambda)=k$ and $\mult_B(\lambda)=1$, where $\ker(A - \lambda I)$ and $\ker(B - \lambda I)$ are generic.  If either 
\begin{itemize}
\item $|\beta \cap (\{u\}\times V(H))| = 2$ for $k$ distinct $u\in V(G)$, or
\item $|\beta\cap (V(G)\times \{v\})| = k+1$ for some $v\in V(H)$,
\end{itemize}
then the set $\beta$ is an SSP liberation set of $A\oplus B$. In particular, there is a matrix $C\in S(G\dunion H + \beta)$ with the SSP such that  $\spec(C) = \spec(A)\cup \spec(B)$.

Moreover, when $k = 1$,  any $\beta\subseteq V(G)\times V(H)$ with $|\beta| = 2$ is an SSP liberation set of $A\oplus B$.  
\end{theorem}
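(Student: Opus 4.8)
The plan is to reduce everything to the structure of $\mathcal{R}(A,B)$ supplied by Proposition~\ref{prop:twist}, exactly as in the proof of Theorem~\ref{thm:multi-multi-eigenvalue-in-common}, but now exploiting that $\mult_B(\lambda)=1$. Let $U_A$ be a $|V(G)|\times k$ matrix whose columns form a basis of $\ker(A-\lambda I)$, and let $\bv$ span $\ker(B-\lambda I)$; since $\ker(B-\lambda I)$ is generic and one-dimensional, $\bv$ is nowhere zero. Proposition~\ref{prop:twist} gives
\[\mathcal{R}(A,B)=\{U_A\bs\,\bv\trans \colon \bs\in\mathbb{R}^k\}.\]
The key observation is that for $Y=U_A\bs\,\bv\trans$ and an edge $\{u,w\}$ between $G$ and $H$ (with $u\in V(G)$, $w\in V(H)$) one has $Y_{u,w}=(U_A\bs)_u\,v_w$, and since $v_w\neq 0$ this vanishes if and only if $(U_A\bs)_u=0$. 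Hence for any set $\gamma$ of edges between $G$ and $H$, a matrix $Y=U_A\bs\,\bv\trans\in\mathcal{R}(A,B)$ satisfies $Y\vert_\gamma=0$ if and only if $U_A\bs$ vanishes on the projection $\pi(\gamma):=\{u\in V(G)\colon \{u,w\}\in\gamma \text{ for some }w\in V(H)\}$.

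By Definition~\ref{def:liberation} and the first item of Proposition~\ref{prop:directsxp}, it suffices to show that for every $\beta'\subset\beta$ with $|\beta'|=|\beta|-1$, the only $Y\in\mathcal{R}(A,B)$ with $Y\vert_{\beta'}=0$ is $Y=O$. By the previous paragraph this amounts to $U_A\bs=\bzero$ whenever $U_A\bs$ vanishes on $\pi(\beta')$; since $\ker(A-\lambda I)$ is generic, it is enough to know $|\pi(\beta')|\geq k$, because then some $k\times k$ submatrix $U_A[\mathcal{I},:]$ with $\mathcal{I}\subseteq\pi(\beta')$ is invertible, forcing $\bs=\bzero$. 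Thus the theorem reduces to the combinatorial claim: under either hypothesis on $\beta$, deleting a single edge from $\beta$ leaves a set $\beta'$ with $|\pi(\beta')|\geq k$.

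I would then verify this claim directly. Deleting one edge from $\beta$ lowers the degree, in the bipartite graph $\beta$, of exactly one vertex of $V(G)$ by one. In the first case there are $k$ vertices $u\in V(G)$ of degree $2$ in $\beta$; after the deletion at least $k-1$ of them still have degree $2$ and the remaining one has degree $\geq 1$, so all $k$ lie in $\pi(\beta')$. In the second case some $v\in V(H)$ is joined in $\beta$ to $k+1$ distinct vertices of $V(G)$; deleting one edge removes at most one of these joins, so at least $k$ of those vertices remain adjacent to $v$ in $\beta'$ and hence lie in $\pi(\beta')$. Either way $|\pi(\beta')|\geq k$, so $\beta$ is an SSP liberation set of $A\oplus B$, and Lemma~\ref{lem:liberation} yields $C\in\mptn((G\dunion H)+\beta)$ with the SSP and $\spec(C)=\spec(A\oplus B)=\spec(A)\cup\spec(B)$.

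Finally, for the ``moreover'' statement take $k=1$. Then $U_A=\bu$ is a single nowhere-zero column, so every nonzero $Y\in\mathcal{R}(A,B)=\vspan\{\bu\bv\trans\}$ is nowhere zero; in particular $Y\vert_{\beta'}=0$ forces $Y=O$ for every nonempty set $\beta'$ of edges between $G$ and $H$. If $\beta\subseteq V(G)\times V(H)$ has $|\beta|=2$, then each $\beta'\subset\beta$ with $|\beta'|=1$ is nonempty, so $A\oplus B$ has the SSP with respect to $(G\dunion H)+\beta'$ by Proposition~\ref{prop:directsxp}, and $\beta$ is an SSP liberation set. The only real care needed is the bookkeeping in the combinatorial step — in particular, allowing $\beta$ to carry edges beyond those named in the hypotheses, which only enlarges $\pi(\beta')$, and the fact that a single edge deletion removes at most one vertex from $\pi(\beta')$.
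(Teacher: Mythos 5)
Your proof is correct and follows essentially the same route as the paper: the rank-one description $Y=U_A\bs\,\bv\trans$ from Proposition~\ref{prop:twist}, the nowhere-zero $\bv$, genericity of $U_A$, Proposition~\ref{prop:directsxp}, and Lemma~\ref{lem:liberation}. The only cosmetic difference is that you treat the second bullet ($k+1$ edges at one vertex of $V(H)$) directly with the same projection argument, whereas the paper dispatches it by citing Theorem~\ref{thm:multi-multi-eigenvalue-in-common}; your explicit bookkeeping of $\pi(\beta')$ is a fine (if anything slightly more careful) equivalent of the paper's ``at least $k$ zero rows'' step.
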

\begin{proof}
The case when $|\beta\cap (V(G)\times \{v\})| = k+1$ for some $v\in V(H)$ follows from Theorem~\ref{thm:multi-multi-eigenvalue-in-common}, so we focus on the case when $|\beta \cap (\{u\}\times V(H))| = 2$ for $k$ distinct $u\in V(G)$.  Note that any $\beta'\subset\beta$ with $|\beta'| = |\beta| - 1$ satisfies $|\beta' \cap (\{u\}\times V(H))| = 1$ for $k$ distinct $u\in V(G)$. 

 Let $U_A$ be a $n\times k$ matrix whose columns form a basis of $\ker(A - \lambda I)$, and $\ker(B - \lambda I) = \vspan\{\bv\}$ for some vector $\bv$.  Now $Y \in \mathcal{R}(A,B)$ can be written as 
\[Y = U_A\bs_Y\bv\trans\]
for some $\bs_Y \in \mat[k,1]$.  If $Y\vert_{\beta'}=0$, then at least $k$ rows of $Y$ have a zero entry.  However, since $\bv$ is nowhere zero by the genericity of $\ker(B - \lambda I)$, each row of $Y$ is either the zero vector or a nowhere zero vector.  Therefore, $Y$ contains at least $k$ zero rows.  The genericity of $U_A$ now implies $\bs_Y = \bzero$ and $Y = O$.  Since this argument applies to any $\beta'\subset\beta$ with $|\beta'| = |\beta| - 1$, we have proved that $\beta$ is an SSP liberation set of $A\oplus B$.   

In the case of $k = 1$, we may further assume $\ker(A - \lambda I) = \vspan\{\bu\}$ for some nowhere zero vector $\bu$ and $Y = \bu s_Y\bv\trans$ for some scalar $s_Y$.  As long as $Y$ contains a zero entry, we know $s_Y = 0$ and $Y = O$.  Therefore, any $\beta\subseteq V(G)\times V(H)$ with $|\beta| = 2$ is an SSP liberation set of $A\oplus B$.  
\end{proof}

\begin{remark}
\label{rem:beinggeneric}
Below we list some situations in which we can prove genericity of eigenspaces:
\begin{itemize}
\item Let $T$ be a tree and $A\in\mptn(G)$.  Then the smallest and the largest eigenvalues of $A$ must be simple by the Perron--Frobenius Theorem, and their eigenspaces are generic.
\item Let $G$ be a connected graph on $n$ vertices and $\lambda_1 < \cdots < \lambda_n$.  Then there is a matrix $A\in\mptn(G)$ with $\spec(A) = \{\lambda_1,\ldots,\lambda_n\}$ such that the eigenspace of each eigenvalue is generic \cite{MR3506498}.  
\item Let $G$ be a complete graph $K_n$ and $A\in\mptn(G)$.  By choosing appropriate orthogonal matrices $Q$, we may replace $A$ by $Q\trans AQ$ and assume the eigenspace of each eigenvalue is generic.  
\end{itemize}
\end{remark}

\begin{example}
\label{ex:twotrees}
Let $T_1$ and $T_2$ be trees and $H$ a graph obtained from $T_1\dunion T_2$ by adding two arbitrary edges between $V(T_1)$ and $V(T_2)$.  Suppose $A\in\mptn(T_1)$ and $B\in\mptn(T_2)$ are matrices with the SSP and the ordered multiplicity lists $(1, m_2,\ldots, m_{q_1-1},1)$ and $(1,r_2,\ldots, r_{q_2-1},1)$, respectively.  Then both
\[(2,m_2,\ldots, m_{q_1-1},1, r_2, \ldots, r_{q_2-1}1) \text{ and } (1,m_{q_1-1}, \ldots, m_2, 2, r_2, \ldots, r_{q_2-1},1,)\] 
are ordered multiplicity lists that are realizable by a matrix in $\mptn(H)$ with the SSP by Theorem~\ref{thm:multi-1-eigenvalue-in-common}.

In particular, it is known that $K_{1,3}$ and $P_2$ are SSP graphs with each realizable ordered multiplicity being spectrally arbitrary; see~\cite{MR4074182}. So, for arbitrary choice of real numbers $\lambda_1<\lambda_2<\lambda_3<\lambda_4$, let  $A\in \mptn(K_{1,3})$ be a matrix with the SSP and $\spec(A)=\{\lambda_1,\lambda_2^{(2)},\lambda_3\}$,  and let $B\in \mptn(P_{2})$ be a matrix with the SSP and $\spec(B)=\{\lambda_3,\lambda_4\}$.  
 Then by Theorem~\ref{thm:multi-1-eigenvalue-in-common}, the ordered multiplicity list $(1,2,2,1)$ is spectrally arbitrary and realizable by a matrix with the SSP for the graph $\mathsf{G_{100}}$ shown on Figure~\ref{fig:g100}. This resolves one of the multiplicity lists in~\cite[Appendix~B]{MR4284782}.
 
\begin{figure}[h]
\centering
\begin{tikzpicture}
\foreach \i in {1,...,6} {
    \pgfmathsetmacro{\angle}{60 * (\i - 1)}
    \node[label={\angle:$\i$}] (\i) at (\angle:1) {};
 }
\draw (3)--(1) -- (2);
\draw (4) -- (1);
\draw (5)-- (6);
\draw[color=blue,thick,dashed] (5) --(4) -- (6);
\node[rectangle,draw=none] at (0,-1.5) {$\mathsf{G_{100}}$};
\end{tikzpicture}
\caption{Graph $\mathsf{G_{100}} = K_{1,3}\dunion P_2 + \{\{4,5\},\{4,6\}\}$ on six vertices.}
\label{fig:g100}
\end{figure}
\end{example}

\begin{example}\label{ex:g127g169}
Let $G$ and $H$ be graphs.  Suppose $A\in\mptn(G)$ and $B\in\mptn(H)$ both have the SSP with $\spec(A) = \{\theta\}\cup\sigma$ and $\spec(B) = \{\theta\}\cup\tau$, where $\theta\notin\sigma \cup \tau$ and $\sigma\cap\tau = \emptyset$.  Moreover, assume that the eigenspaces of $A$ and $B$ corresponding to $\theta$ are generic. 

By Remark~\ref{rem:beinggeneric}, such matrices $A$ and $B$ exist for any connected graphs $G$ and $H$, if $\sigma$ and $\tau$ are sets of distinct real numbers of appropriate sizes. Furthermore, if $G$ is a complete graph, then $A$ exists for multiset $\sigma$ of real numbers of appropriate size, and a similar statement holds for $H$ and $B$.   Let $\beta\subseteq V(G)\times V(H)$ with $|\beta| = 2$.  By Theorem~\ref{thm:multi-1-eigenvalue-in-common}, there is a matrix in $\mptn(G\dunion H + \beta)$ with the SSP and spectrum $\{\theta^{(2)}\}\cup\sigma\cup\tau$.  

In particular, this setup applies to graphs $\mathsf{G_{127}}$ and $\mathsf{G_{169}}$ in Figure~\ref{fig:g127g169} for the following choices of $G$, $H$ and $\beta$:

\begin{itemize}
\item Let $G=K_3$ with $V(G)=[3]$, $H=P_3$ with $V(H)=3+[3]$, and $\beta = \{\{1,6\}, \{3,4\}\}$.  Since $\mathsf{G_{127}} = K_3\dunion P_3 + \beta$, we now know that the unordered multiplicity list $\{2,2,1,1\}$ is spectrally arbitrary for $\mathsf{G_{127}}$.
\item Taking $G=K_4$, $V(G)=[4]$, $H=K_2$, $V(H)=4+[2]$, and $\beta = \{\{4,5\}, \{2,6\}\}$, we have $\mathsf{G_{169}} = K_4\dunion K_2 + \beta$ hence the unordered multiplicity list $\{3,2,1\}$ is spectrally arbitrary for $\mathsf{G_{169}}$.
\end{itemize}

\begin{figure}[h]
\centering
\begin{tikzpicture}
\foreach \i in {1,...,6} {
    \pgfmathsetmacro{\angle}{60 + 60 * (\i - 1)}
    \node[label={\angle:$\i$}] (\i) at (\angle:1) {};
 }

\draw (1) -- (2) -- (3) -- (1);
\draw (4) -- (5) -- (6);
\draw[color=blue,thick,dashed] (1) -- (6);
\draw[color=blue,thick,dashed] (3) -- (4);
\node[rectangle,draw=none] at (0,-1.5) {$\mathsf{G_{127}}$};
\end{tikzpicture}
\hfil
\begin{tikzpicture}
\foreach \i in {1,...,6} {
    \pgfmathsetmacro{\angle}{-60 + 60 * (\i - 1)}
    \node[label={\angle:$\i$}] (\i) at (\angle:1) {};
 }
\draw (1) -- (2) -- (3) -- (4) -- (1);
\draw (1) -- (3);
\draw (2) -- (4);
\draw (5) -- (6);
\draw[color=blue,thick,dashed] (4) -- (5);
\draw[color=blue,thick,dashed] (2) -- (6);
\node[rectangle,draw=none] at (0,-1.5) {$\mathsf{G_{169}}$};
\end{tikzpicture}

\caption{Graphs $\mathsf{G_{127}} = K_3\dunion P_3 + \{\{1,6\},\{3,4\}\}$ and $\mathsf{G_{169}} = K_4\dunion K_2 + \{\{2,6\},\{4,5\}\}$ on six vertices.}
\label{fig:g127g169}
\end{figure}
These results resolve the question of spectral arbitrariness for ordered multiplicity lists of $\mathsf{G_{127}}$ and $\mathsf{G_{169}}$ listed in \cite[Appendix~B]{MR4284782}.  
\end{example}  

\begin{figure}[h]
\centering
\begin{tikzpicture}
\foreach \i in {1,...,6} {
    \pgfmathsetmacro{\angle}{60 * (\i - 1)}
    \node[label={\angle:$\i$}] (\i) at (\angle:1) {};
 }

\draw (1) -- (2) -- (3) -- (1);
\draw (4) -- (5) -- (6);
\draw[color=blue,thick,dashed] (6) -- (1) -- (5) -- (3) -- (4);
\node[rectangle,draw=none] at (0,-1.5) {$\mathsf{G_{163}}$};
\end{tikzpicture}
\caption{Graph $\mathsf{G_{163}} = K_3\dunion P_3 + \beta$ on six vertices with the set $\beta = \{\{1,6\}, \{1,5\}, \{3,5\}, \{3,4\}\}$.}
\label{fig:g163}
\end{figure}

\begin{example}
\label{ex:g163}
Let $G = K_3$, $V(G)=[3]$, $H = P_3$, $V(H)=3+[3]$, 
\[\beta = \{\{1,6\}, \{1,5\}, \{3,5\}, \{3,4\}\},\]
and $\theta, \lambda_1, \lambda_2, \mu_1\in\mathbb{R}$ distinct real numbers. 
By Remark~\ref{rem:beinggeneric}, there exist matrices $A\in\mptn(G)$ and $B\in\mptn(H)$ with the SSP, with $\spec(A) = \{\theta^{(2)}, \mu_1\}$ and $\spec(B) = \{\theta, \lambda_1,\lambda_2\}$ such that the eigenspaces of $A$ and $B$ with respect to $\theta$ are both generic.  

Then the graph $\mathsf{G_{163}}$ in Figure~\ref{fig:g163} is isomorphic to $G\dunion H + \beta$.  
Notice that $\beta$ contains two elements of the form $\{1,\cdot\}$ and two elements of the form $\{3,\cdot\}$, so there exists a matrix in $\mptn(\mathsf{G}_{163})$ with the SSP and the spectrum $\{\theta^{(3)}, \lambda_1, \lambda_2, \mu_1\}$ by Theorem~\ref{thm:multi-1-eigenvalue-in-common}.  Since $\theta$, $\lambda_1$, $\lambda_2$ and $\mu_1$ are arbitrary distinct numbers, the unordered multiplicity list $\{3,1,1,1\}$ is spectrally arbitrary, which resolves the question of spectral arbitrariness for the some of previously unresolved ordered multiplicity lists of $\mathsf{G_{163}}$ in \cite[Appendix~B]{MR4284782}.
\end{example}

We end this section with examples where the eigenspaces are not generic.

\begin{figure}[h]
\centering
\begin{tikzpicture}
\node (1) at (-2,0) {};
\node (2) at (-1,1) {};
\node (3) at (-1,0) {};
\node (4) at (-1,-1) {};
\node (5) at (2,0) {};
\node (6) at (1,1) {};
\node (7) at (1,0) {};
\node (8) at (1,-1) {};
\draw (1) -- (2); 
\draw (1) -- (3); 
\draw (1) -- (4);
\draw (5) -- (6); 
\draw (5) -- (7); 
\draw (5) -- (8);
\foreach \i in {2,3} {
    \foreach \j in {6,7,8} {
        \draw[blue,thick,dashed] (\i) -- (\j);
    }
}
\end{tikzpicture}
\caption{An example where the liberation set does not rely on generic eigenspaces.}
\label{fig:k13k13}
\end{figure}

\begin{example}
Consider $K_{1,3}$ with $V(K_{1,3})=[4]$, where $1$ is the vertex of degree $3$.  Let $A,B \in \mptn(K_{1,3})$ such that $\spec(A) = \{\lambda_1, \theta^{(2)}, \lambda_2\}$ and $\spec(B) = \{\mu_1, \theta^{(2)}, \mu_2\}$ with $\lambda_1 < \theta < \lambda_2$ and $\mu_1 < \theta < \mu_2$.  
By, e.g., \cite{MR4074182}, such spectra are realizable by matrices with the SSP for any distinct real numbers $\lambda_1$, $\lambda_2$, $\mu_1$, $\mu_2$, and $\theta$.
 
Then by interlacing and \cite[Lemma~5.1]{MR3567513} we have
\[A = \begin{pmatrix} 
 a & \ba\trans \\ 
 \ba & \theta I_3
\end{pmatrix} \text{ and }
B = \begin{pmatrix} 
 b & \bb\trans \\ 
 \bb &\theta I_3 
\end{pmatrix}.\]
Let $U_A$ be a $4\times 2$ matrix whose columns form a basis of $\ker(A - \theta I)$.  The structure of $A$ implies that $U_A$ has the form 
\[\begin{pmatrix}
 0 & 0 \\
 \bu_1 & \bu_2
\end{pmatrix},\]
 where any linear combination of $\bu_1$ and $\bu_2$ is orthogonal to $\ba$. In particular, $c_1\bu_1 + c_2\bu_2$ must have at least two nonzero entries for any $c_1,c_2\in\mathbb{R}$ unless $c_1=c_2=0$. If $U_A[\mathcal{I},:]$ is singular for some $\mathcal{I}\subseteq\{2,3,4\}$ with $|\mathcal{I}| = 2$, then there must be nonzero $c_1$ and $c_2$ such that $c_1\bu_1 + c_2\bu_2$ has only one nonzero entry, which is impossible.  Therefore, every submatrix $U_A[\mathcal{I},:]$ with $\mathcal{I}\subseteq\{2,3,4\}$ and $|\mathcal{I}| = 2$ must be invertible.  This means $\ker(A - \theta I)$ is ``locally'' generic on $\{2,3,4\}$, and the same behavior happens for $\ker(B - \theta I)$.  

Following the same argument in Theorem~\ref{thm:multi-multi-eigenvalue-in-common}, for any $V_G\subseteq \{2,3,4\}$ with $|V_G| = 2$ and $V_H\subseteq \{6,7,8\}$ with $|V_H| = 3$, the set $\beta = V_G \times V_H$ is a liberation set of $A\oplus B$.  The graph $K_{1,3}\dunion K_{1,3} + \beta$ is shown in Figure~\ref{fig:k13k13}.  Therefore, there is a matrix in $\mptn(K_{1,3}\dunion K_{1,3} + \beta)$ with the SSP and the ordered multiplicity list $(1,1,4,1,1)$, and it is spectrally arbitrary.
\end{example}

\section{Zero forcing}\label{sec:ZF}

In the following, we focus on disconnected graphs and build a technique that depends on  the (classical) zero forcing game to identify SSP liberation sets of corresponding matrices.  The analogous statements for the SAP is included at the end of the section.

We first recall the zero forcing game introduced in \cite{MR2388646}.  Let $G$ be a graph. At each stage of the \emph{zero forcing game}   all vertices are assigned a color: blue or white. At the start of the game the initial set $F \subseteq V(G)$ of blue vertices is chosen. The game is played by repeated application of the following \emph{color change rule}.  If $v$ is the only white neighbor of a blue vertex $u$, then $v$ turns blue in the next step. This action is called \emph{a force} and is denoted by $u\rightarrow v$.    If, starting with an initial set $F$ of blue vertices, repeated application of the color change rule successfully turns all the vertices blue, then $F$ is called a \emph{zero forcing set} for $G$. 

The \emph{zero forcing number} of a graph $G$ is the minimum size of a zero forcing set, denoted by $Z(G)$.  For example, a leaf is a zero forcing set for $P_n$ and $Z(P_n) = 1$, while in $C_n$, any two adjacent vertices form a zero forcing set and $Z(C_n) = 2$.  It is known that $Z(G)$ is an upper bound for the multiplicity of any eigenvalue of any matrix in $\mptn(G)$ \cite{MR2388646}.

\begin{definition}
\label{def:zfcover}
Let $G$ be a graph.  A set $F \subseteq V(G)$ is a \emph{zero forcing cover} of $G$ if $F'$ is a zero forcing set of $G$ for any $F'\subset F$ with $|F'| = |F| - 1$. 
\end{definition}

If $F$ is a zero forcing cover of $G$, then clearly any superset of $F$ is also a zero forcing cover of $G$.  In general, the union of two or more disjoint zero forcing sets is a zero forcing cover.  However, the following example shows that a zero forcing cover does not need to be of this form.

\begin{example}\label{ex:G30}
 It is not difficult to check that the colored sets of vertices of $\mathsf{G_{30}}$ and $\mathsf{G_{36}}$ in Figure~\ref{fig:Paw,S211} are examples of zero forcing covers. More generally, the set of all leaves in a generalized star $G$ is a zero forcing cover of $G$.

 \begin{figure}[h]
\centering
\begin{tikzpicture}
\node[fill=blue] (1) at (-1,0) {};
\node (2) at (0,0) {};
\node (3) at (1,0) {};
\node[fill=blue] (4) at (2,0.5) {};
\node[fill=blue] (5) at (2,-0.5) {};
\draw (1) -- (2) -- (3) -- (4);
\draw (3) -- (5);
\end{tikzpicture}
\hfil
\begin{tikzpicture}
\node[fill=blue] (1) at (-1,0) {};
\node (2) at (0,0) {};
\node (3) at (1,0) {};
\node[fill=blue] (4) at (2,0.5) {};
\node[fill=blue] (5) at (2,-0.5) {};
\draw (1) -- (2) -- (3) -- (4);
\draw (3) -- (5) -- (4);
\end{tikzpicture}
\caption{Zero forcing covers of $\mathsf{G_{30}}$ and $\mathsf{G_{36}}$, colored as blue.}
\label{fig:Paw,S211}
\end{figure}
\end{example}

As we will see below, a zero forcing cover of the Cartesian product of graphs $G$ and $H$ allows us to to find an SSP liberation set of some matrices in $\mptn(G\dunion H)$.  Recall that the \emph{Cartesian product} of two graphs $G$ and $H$ is the graph $G\cart H$ on the vertex set 
\[
 V(G\cart H) = \{(u,v): u\in V(G), v\in V(H)\}
\]
such that two vertices $(u_1,v_1)$ and $(u_2,v_2)$ are adjacent if either 
\begin{itemize}
\item $u_1 = u_2$ and $\{v_1,v_2\}\in E(H)$, or
\item $v_1 = v_2$ and $\{u_1,u_2\}\in E(G)$.
\end{itemize}
It is known \cite{MR2388646} that $Z(P_s\cart P_t) = \min\{s,t\}$ and $Z(C_s\cart P_t) = \min\{s, 2t\}$.  For $s\leq t$, it is known \cite{MR3737108} 
\[Z(C_s\cart C_t) = \begin{cases}
 2s - 1 & \text{if } s = t \text{ and } s \text{ is odd}, \\
 2s & \text{otherwise}.
\end{cases}\]

\begin{lemma}\label{lem:ZF}
Let $G$ and $H$ be graphs, and $\beta'$ a zero forcing set of $G\cart H$. Then $A\oplus B$ has the SSP with respect to $G\dunion H + \beta'$ for any $A\in\mptn(G)$ and $B\in\mptn(H)$ both with the SSP.  
\end{lemma}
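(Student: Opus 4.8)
The strategy is to reduce the SSP-with-respect-to condition to the matrix equation in Proposition~\ref{prop:directsxp} and then to translate a zero forcing computation on $G \cart H$ into a statement about the only solution $Y$ being zero. By Proposition~\ref{prop:directsxp}(1), it suffices to show that $Y = O$ is the only matrix $Y \in \mathbb{R}^{|V(G)|\times|V(H)|}$ with $Y\vert_{\beta'} = 0$ satisfying $AY - YB = O$. Here I index the rows of $Y$ by $V(G)$ and the columns by $V(H)$, so that the entry $Y_{u,v}$ is naturally associated with the vertex $(u,v)$ of $G\cart H$; the condition $Y\vert_{\beta'}=0$ says precisely that $Y_{u,v}=0$ whenever $(u,v)\in\beta'$, i.e.\ $Y$ vanishes on the (blue) zero forcing set $\beta'$.

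First I would record the key identity coming from the matrix equation: if $AY - YB = O$, then looking at a single entry, for every $u\in V(G)$ and $v\in V(H)$ we get
\[
 \sum_{u'} A_{u,u'} Y_{u',v} \;=\; \sum_{v'} Y_{u,v'} B_{v',v}.
\]
The plan is then to run the zero forcing process on $G\cart H$ starting from $\beta'$ and show by induction along the forcing sequence that $Y_{u,v} = 0$ for every vertex $(u,v)$ that gets colored blue; since $\beta'$ is a zero forcing set, this forces $Y = O$. Suppose a blue vertex $(u_1,v_1)$ forces a white neighbor $(u_2,v_2)$. By the structure of the Cartesian product, the neighbors of $(u_1,v_1)$ are of two types: $(u_1, v')$ with $\{v',v_1\}\in E(H)$, and $(u', v_1)$ with $\{u',u_1\}\in E(G)$. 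The forced vertex $(u_2,v_2)$ is the unique white one among all of these, so \emph{every} other neighbor is already blue, and by induction $Y$ vanishes at every neighbor of $(u_1,v_1)$ except possibly at $(u_2,v_2)$, and also at $(u_1,v_1)$ itself.

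The main step is then to apply the entry identity above at the pair $(u_1,v_1)$. Write it as
\[
 A_{u_1,u_1} Y_{u_1,v_1} + \sum_{u'\neq u_1} A_{u_1,u'} Y_{u',v_1}
 \;=\; Y_{u_1,v_1} B_{v_1,v_1} + \sum_{v'\neq v_1} Y_{u_1,v'} B_{v',v_1}.
\]
On the left, $A_{u_1,u'}$ is nonzero only when $u'=u_1$ or $\{u_1,u'\}\in E(G)$, i.e.\ only for indices $u'$ with $(u',v_1)$ a neighbor of $(u_1,v_1)$ in $G\cart H$ (or equal to it); similarly on the right $B_{v',v_1}$ is nonzero only for $v'$ with $(u_1,v')$ a neighbor (or equal). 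By the induction hypothesis all the corresponding $Y$-entries vanish except the one at $(u_2,v_2)$. Exactly one of the two sides therefore contains the surviving term $Y_{u_2,v_2}$ with a nonzero coefficient (either $A_{u_1,u_2}$ if $v_2 = v_1$, or $B_{v_2,v_1}$ if $u_2 = u_1$), and all remaining terms are zero, so $Y_{u_2,v_2} = 0$. This closes the induction, hence $Y = O$, and Proposition~\ref{prop:directsxp}(1) gives that $A\oplus B$ has the SSP with respect to $G\dunion H + \beta'$.

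\textbf{Main obstacle.} The only delicate bookkeeping is making sure the "unique white neighbor" condition of the color change rule lines up exactly with "unique surviving $Y$-term" in the entry identity — that is, that the nonzero off-diagonal entries of $A$ (respectively $B$) that appear in the sum correspond precisely to the $G$-edges (respectively $H$-edges) at $u_1$ (respectively $v_1$), and that no diagonal contribution $A_{u_1,u_1}$ or $B_{v_1,v_1}$ causes trouble — but the diagonal terms only ever multiply $Y_{u_1,v_1}$, which is already zero by induction, so they drop out. There is no genericity or eigenvalue hypothesis needed here, which is what makes the zero forcing formulation clean.
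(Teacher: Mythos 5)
Your proposal is correct and follows essentially the same route as the paper's proof: reduce via Proposition~\ref{prop:directsxp} (and the SSP of $A$ and $B$) to the equation $AY-YB=O$ with $Y\vert_{\beta'}=0$, then induct along the zero forcing sequence on $G\cart H$, using the entrywise identity at the forcing vertex to show the unique white neighbor's $Y$-entry vanishes because its coefficient is a nonzero entry of $A$ or $B$. The handling of the diagonal terms and the case split $u_2=u_1$ versus $v_2=v_1$ matches the paper's argument exactly.
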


\begin{proof}
Let us denote $\Gamma:=G\dunion H + \beta'$ and let $A\in\mptn(G)$, $B\in\mptn(H)$  have the SSP. Moreover, suppose $X\in\mptnclo(\overline{\Gamma})$ satisfies $[A\oplus B, X] = O$. We may write 
\[
X = \begin{pmatrix}
 X_A & Y \\
 Y\trans & X_B
\end{pmatrix}
\]
conformal with the partition of $A\oplus B$ such that $X_A\in\mptnclo(\overline{G})$, $X_B\in\mptnclo(\overline{H})$, and $Y\vert_{\beta'}=0$. Because both $A$ and $B$ have the SSP, the condition $[A\oplus B, X] = O$ is equivalent to $AY - YB= O.$
Next we look at the entries of $AY-YB$, and use the notation  $A = \begin{pmatrix} a_{i,j} \end{pmatrix}$, $B = \begin{pmatrix}  b_{i,j} \end{pmatrix}$, and $Y = \begin{pmatrix} y_{i,j} \end{pmatrix}$. 

Let $(i_0,j_0), (i_1,j_1) \in V(G \cart H)$ and let $F \subset V(G \cart H)$ be a set of blue vertices that allows the force $(i_0,j_0)\rightarrow (i_1,j_1)$ on $G\cart H$ (by one application of the color change rule). 
First we prove that if $y_{i,j} = 0$ for all $(i,j)\in F$ and $(AY - YB)_{i_0,j_0} =0$, then $y_{i_1,j_1} = 0$.  
The equation $(AY-YB)_{i_0,j_0} = 0$ can be written as:
\[\sum_{k \in N_G[i_0]}a_{i_0,k}y_{k,j_0} - \sum_{k \in N_H[j_0]}b_{k,j_0}y_{i_0,k} = 0.\]
From $(i_0,j_0) \rightarrow (i_1,j_1)$, we know that $(i_0,j_0)$ and all its neighbors except $(i_1,j_1)$ in $G\cart H$ are in $F$. This means that all variables $y_{i,j}$ appearing in the equation above  are assumed to be zero except for $y_{i_1,j_1}$. Moreover, since $(i_1,j_1)$ is a neighbor of $(i_0,j_0)$, either $i_1 = i_0$ or $j_1 = j_0$.  If $i_1 = i_0$, then the equation reduces to $-b_{j_1,j_0}y_{i_0,j_1} = 0$.  If $j_1 = j_0$, then it reduces to $a_{i_0,i_1}y_{i_1,j_0} = 0$.  In either case we conclude $y_{i_1,j_1} = 0$.  

Assuming that $Y\vert_{\beta'}=0$ and  $AY-YB=O$, we can now conclude $Y=0$ by repeated application of the claim above. 
Therefore, by Proposition~\ref{prop:directsxp}, $A\oplus B$ has the SSP with respect to $G\dunion H + \beta'$ if $\beta'$ is a zero forcing set of $G\cart H$, as claimed.  
\end{proof}

\begin{theorem}
\label{thm:zf}
Let $G$ and $H$ be graphs.  Suppose $A\in\mptn(G)$ and $B\in\mptn(H)$ have the SSP.  If $\beta$ is a zero forcing cover of $G\cart H$, then $\beta$ is an SSP liberation set of $A\oplus B$.
\end{theorem}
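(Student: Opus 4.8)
The plan is to read off the statement as a direct corollary of Lemma~\ref{lem:ZF}, once the combinatorial and matrix-theoretic dictionaries are aligned. First I would note that a ``crossing'' non-edge of $G\dunion H$, i.e.\ a pair $\{u,v\}$ with $u\in V(G)$ and $v\in V(H)$, is literally an element of $V(G)\times V(H)=V(G\cart H)$; so a set $\beta$ of such non-edges is at once a subset of $E(\overline{G\dunion H})$ and a subset of $V(G\cart H)$. This identification is what lets ``zero forcing cover of $G\cart H$'' and ``SSP liberation set of $A\oplus B$'' refer to the same object.

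Next I would unwind the two definitions against each other. By Definition~\ref{def:zfcover}, $\beta$ being a zero forcing cover of $G\cart H$ means that every $\beta'\subset\beta$ with $|\beta'|=|\beta|-1$ is a zero forcing set of $G\cart H$. Fix such a $\beta'$. Since $A\in\mptn(G)$ and $B\in\mptn(H)$ both have the SSP, Lemma~\ref{lem:ZF} gives that $A\oplus B$ has the SSP with respect to $G\dunion H+\beta'$. As $\beta'$ was an arbitrary $(|\beta|-1)$-element subset of $\beta$, this is precisely the condition in Definition~\ref{def:liberation} for $\beta$ to be an SSP liberation set of $A\oplus B$, which is the desired conclusion. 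If one also wants to record the consequence that there is a matrix $C\in\mptn(G\dunion H+\beta)$ with the SSP and $\spec(C)=\spec(A)\cup\spec(B)$, it then follows by feeding this liberation set into Lemma~\ref{lem:liberation}.

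I do not expect a genuine obstacle here: Lemma~\ref{lem:ZF} already carries the technical weight, having translated a single application of the color change rule into the vanishing of one entry of $AY-YB$. The only points needing care are purely formal---matching the ``remove one vertex'' quantifier of Definition~\ref{def:zfcover} with the ``remove one edge'' quantifier of Definition~\ref{def:liberation} under the identification above, and the harmless remark that a zero forcing cover of a graph with at least one vertex is nonempty, so that $\beta$ legitimately qualifies as a (nonempty) liberation set in the sense of Definition~\ref{def:liberation}.
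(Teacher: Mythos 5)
Your proposal is correct and follows essentially the same route as the paper: the paper's proof likewise observes that every $(|\beta|-1)$-element subset $\beta'$ of a zero forcing cover is a zero forcing set of $G\cart H$, applies Lemma~\ref{lem:ZF} to conclude $A\oplus B$ has the SSP with respect to $G\dunion H+\beta'$, and invokes Definition~\ref{def:liberation}. Your explicit remarks about the identification of crossing non-edges with $V(G\cart H)$ and the nonemptiness of $\beta$ are fine bookkeeping that the paper leaves implicit.
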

\begin{proof}
If $\beta$ is a zero forcing cover, then any $\beta'\subset\beta$ with $|\beta'| = |\beta| - 1$ is a zero forcing set.  Therefore, by Lemma \ref{lem:ZF} $\beta$ is an SSP liberation set of $A\oplus B$ for any $A\in\mptn(G)$ and $B\in\mptn(H)$ with the SSP.
\end{proof}

In Theorem~\ref{thm:zf} we require that both $A$ and $B$ have the SSP, but we do not assume that $A\oplus B$ has the SSP as well, hence we allow $A$ and $B$ to have some eigenvalues in common.

\begin{example}
\label{ex:pmpn}
Let $2\leq s\leq t$, $G = P_s$ and $H = P_t$.  Let $u_1,\ldots, u_s$ and $v_1,\ldots, v_t$ be the vertices of $G$ and $H$, respectively, following the path order.  Let 
\[\beta = \{\{u_i,v_1\}\colon i\in[s]\} \cup \{\{u_i,v_2\}\colon i\in[s],\ i \equiv 2,3 \hspace{-7pt} \pmod 4\} \cup \{\{u_{s-1}, v_2\}\}.\]
Note that the edge $\{u_{s-1}, v_2\}$ might already be part of the second set in the union above.  Then $\beta$ is a zero forcing cover of $G\cart H$.  By Theorem~\ref{thm:zf}, $\beta$ is an SSP liberation set of $A\oplus B$ for any $A\in\mptn(G)$ and $B\in\mptn(H)$ with the SSP.  Since every matrix of a path has the SSP \cite{MR4080669}, $\beta$ is in fact an SSP liberation set of $G\dunion H$.  Since paths realize any discrete spectrum (spectrum with all eigenvalues distinct), $P_s\dunion P_t + \beta$ realizes any spectrum composed of at most $s$ eigenvalues with multiplicity $2$ and some simple eigenvalues.
\end{example}

Notice that when $H = m K_1$ the Cartesian product $G\cart H$ is isomorphic to the disjoint union of $m$ copies of $G$. Therefore, zero forcing covers  of $G$ can be used to construct a zero forcing cover of $G \cart mK_1$. 

\begin{corollary}
\label{cor:zf}
Let $G$ be a graph on $n$ vertices, $m\in \mathbb{N}$, and $A\in\mptn(G)$ with the SSP.  If $F_j$ are zero forcing covers of $G$, $j\in [m]$, then 
\[\beta = \bigcup_{j\in [m]}\{\{u,i\}\colon u\in F_j,i\in[m]\}\]
is an SSP liberation set of $A\oplus\operatorname{diag}(\lambda_1,\ldots,\lambda_m)$ for any distinct $\lambda_1,\ldots,\lambda_m\in\mathbb{R}$.
\end{corollary}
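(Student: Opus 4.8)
The plan is to reduce Corollary~\ref{cor:zf} to Theorem~\ref{thm:zf} by choosing the right graph $H$ and the right zero forcing cover of $G\cart H$. First I would set $H = mK_1$, so that $B := \operatorname{diag}(\lambda_1,\dots,\lambda_m)\in\mptn(H)$ for any choice of distinct reals $\lambda_1,\dots,\lambda_m$; every diagonal matrix with distinct entries trivially has the SSP (its only commuting matrices with zero diagonal are themselves diagonal, hence zero on $\mptnclo(\overline{H}) = \mskym$ restricted appropriately), so both $A$ and $B$ have the SSP as required by Theorem~\ref{thm:zf}. The key observation is that $G\cart mK_1$ is the disjoint union of $m$ copies of $G$: identifying the $j$-th copy with the vertex set $\{(u,j)\colon u\in V(G)\}$, a force inside one copy is exactly a force in $G$, and there are no edges between copies.

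Next I would verify that $\beta = \bigcup_{j\in[m]}\{\{u,i\}\colon u\in F_j,\ i\in[m]\}$ is a zero forcing cover of $G\cart mK_1$. Interpreting an edge $\{u,i\}$ between $V(G)$ and $V(mK_1)$ as the vertex $(u,i)$ of $G\cart mK_1$, the set $\beta$ corresponds to $\bigcup_{j\in[m]}\bigl(F_j\times[m]\bigr)$; but the component-wise structure means $\beta$ restricted to the $i$-th copy equals $\bigcup_{j\in[m]}F_j \supseteq F_i$ — wait, more carefully, the set of blue vertices in copy $i$ is $\{(u,i)\colon u\in F_j \text{ for some } j\}$, i.e.\ $\bigl(\bigcup_j F_j\bigr)\times\{i\}$. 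To check the zero-forcing-cover property, remove any one element $(u_0,i_0)$ from $\beta$; then in every copy $i\ne i_0$ the blue set is still $\bigcup_j F_j$, which contains $F_i$ and is therefore a zero forcing set of that copy of $G$; and in copy $i_0$ the blue set is $\bigl(\bigcup_j F_j\bigr)\setminus\{u_0\}$. Since $u_0\in F_{j_0}$ for some $j_0$, and $F_{j_0}$ is a zero forcing \emph{cover} of $G$, the set $F_{j_0}\setminus\{u_0\}$ is a zero forcing set of $G$; as $\bigl(\bigcup_j F_j\bigr)\setminus\{u_0\} \supseteq F_{j_0}\setminus\{u_0\}$, copy $i_0$ also gets fully forced. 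Hence every $\beta'\subset\beta$ with $|\beta'|=|\beta|-1$ is a zero forcing set of $G\cart mK_1$, so $\beta$ is a zero forcing cover.

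With these two facts in hand, Theorem~\ref{thm:zf} applied to $G$, $H=mK_1$, $A$, and $B=\operatorname{diag}(\lambda_1,\dots,\lambda_m)$ immediately yields that $\beta$ is an SSP liberation set of $A\oplus B = A\oplus\operatorname{diag}(\lambda_1,\dots,\lambda_m)$, which is exactly the claim. The only mild subtlety — and the step I would flag as needing care rather than difficulty — is bookkeeping the identification between edges of $\overline{G\dunion mK_1}$ joining the two vertex classes, vertices of $G\cart mK_1$, and pairs $(u,i)$; once that dictionary is fixed, the argument is a clean specialization. The distinctness of the $\lambda_i$ is needed only to guarantee $B$ has the SSP; no assumption relating $\spec(A)$ and $\{\lambda_i\}$ is required, which is the point of using Theorem~\ref{thm:zf} rather than the Direct Sum Lemma.
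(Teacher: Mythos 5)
Your proposal is correct and takes essentially the same route the paper intends: Corollary~\ref{cor:zf} is presented as an immediate consequence of Theorem~\ref{thm:zf} with $H=mK_1$, using exactly your two observations---that $B=\operatorname{diag}(\lambda_1,\dots,\lambda_m)$ with distinct $\lambda_i$ has the SSP, and that the given edge set, read as a vertex set of $G\cart mK_1\cong m$ disjoint copies of $G$, is a zero forcing cover (your copy-by-copy check, removing one vertex and falling back on the cover $F_{j_0}$ containing it, is precisely the detail the paper leaves implicit). The only blemish is the garbled parenthetical justifying the SSP of $B$ (it invokes an undefined space); the clean one-line argument is that for symmetric $X$ with zero diagonal, $[B,X]_{i,j}=(\lambda_i-\lambda_j)x_{i,j}$, so distinctness of the $\lambda_i$ forces $X=O$.
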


\begin{example}
Let $P_{n-1}$ have the vertices $u_1,\ldots, u_{n-1}$ following the path order.  Then $\beta = \{u_1,u_{n-1}\}$ is a zero forcing cover of $P_{n-1}$.  By Corollary~\ref{cor:zf}, $\widehat{\beta}=\{\{u_{1},u_n\},\{u_{n-1},u_n\}\}$ is an SSP liberation set of any matrix of the form $A\oplus\begin{pmatrix}\lambda\end{pmatrix}\in\mptn(P_{n-1}\dunion K_1)$, where $A\in\mptn(P_{n-1})$ has the SSP, $\lambda\in\mathbb{R}$ and $V(K_1)=\{u_n\}$. The assumption that $A$ has the SSP can again be ignored by \cite{MR4080669}.  Since $(P_{n-1}\dunion K_1) + \widehat{\beta} \cong C_n$  and any $n - 1$ distinct real numbers can be the spectrum of some $A\in\mptn(P_{n-1})$, Theorem~\ref{lem:liberation} ensures that the unordered multiplicity list $\{2,1,\ldots,1\}$ is spectrally arbitrary with the SSP for $C_n$.  This aligns with \cite[Corollary~7.6]{MR4074182}.
\end{example}

\begin{example}\label{ex:g175}
By \cite[Fig.~1]{MR4074182} the ordered multiplicity list $(1,2,1)$ is spectrally arbitrary and realizable with the SSP for $K_{1,3}$, and by \cite[Lemma~2.2]{MR3034535} there exists a matrix $B\in\mptn(2K_1)$ with distinct eigenvalues $\lambda_1,\lambda_2$ and SSP. Note that the three leaves of $K_{1,3}$ form a zero forcing cover of $K_{1,3}$. Therefore, by Corollary~\ref{cor:zf} and Lemma~\ref{lem:liberation} the ordered multiplicity lists $(1,3,2)$ and $(2,3,1)$ are realizable in $\mathsf{G_{175}}=K_{3,3}$, see Figure~\ref{fig:171,175}, which completes the list of ordered realizable multiplicity lists for $\mathsf{G_{175}}$ in~\cite[Appendix~B]{MR4284782}. 
\end{example}

Let us present some examples in the case when $m=1$ in  Corollary~\ref{cor:zf}.

\begin{example}\label{ex:g129-g145-g153}
 By \cite[Fig.~1]{MR4074182} the ordered multiplicity lists $(1,2,1,1)$ and $(1,1,2,1)$ are spectrally arbitrary and realizable with the SSP for $\mathsf{G_{30}}$, and hence by Corollary~\ref{cor:zf} and Example~\ref{ex:G30} the ordered multiplicity lists $(1,3,1,1)$ and $(1,1,3,1)$ are spectrally arbitrary for $\mathsf{G_{129}}$ with the SSP, see Figure~\ref{fig:g129}.

Since $\mathsf{G_{145}}$ and $\mathsf{G_{153}}$ are supergraphs of $\mathsf{G_{129}}$, it follows by Theorem~\ref{thm:supergraph} that the ordered multiplicity lists $(1,3,1,1)$ and $(1,1,3,1)$ are spectrally arbitrary for $\mathsf{G_{145}}$ and $\mathsf{G_{153}}$ with the SSP. Alternatively, one can arrive at the same conclusion by applying Corollary~\ref{cor:zf} and Example~\ref{ex:G30} to $\mathsf{G_{36}}$ and the zero forcing cover presented in Figure~\ref{fig:Paw,S211}. This gives the answer to spectral arbitrariness of $(1,3,1,1)$ and $(1,1,3,1)$ for $\mathsf{G_{129}}$, $\mathsf{G_{145}}$ and $\mathsf{G_{153}}$ in~\cite[Appendix~B]{MR4284782}. 
\end{example}

\begin{figure}[h]
\centering
\begin{tikzpicture}
\foreach \i in {1,4,6} {
    \pgfmathsetmacro{\angle}{60 * (\i -2)}
    \node[fill=blue,label={\angle:$\i$}] (\i) at (\angle:1) {};
 }
\foreach \i in {2,3} {
    \pgfmathsetmacro{\angle}{60 * (\i -2)}
    \node[label={\angle:$\i$}] (\i) at (\angle:1) {};
 }
\node[fill=black,label={180:$5$}] (5) at (180:1) {};
\draw (1) -- (2) -- (3) -- (4);
\draw (2) -- (6);
\draw[color=blue,thick,dashed] (4) -- (5) -- (6);
\draw[color=blue,thick,dashed] (1) -- (5);
\node[rectangle,draw=none] at (0,-1.5) {$\mathsf{G_{129}}$};
\end{tikzpicture}
\hfil
\begin{tikzpicture}
\foreach \i in {1,4,6} {
    \pgfmathsetmacro{\angle}{60 * (\i -2)}
    \node[fill=blue,label={\angle:$\i$}] (\i) at (\angle:1) {};
 }
\foreach \i in {2,3} {
    \pgfmathsetmacro{\angle}{60 * (\i -2)}
    \node[label={\angle:$\i$}] (\i) at (\angle:1) {};
 }
\node[fill=black,label={180:$5$}] (5) at (180:1) {};
\draw (1) -- (2) -- (3) -- (4);
\draw (2) -- (6);
\draw[color=blue,thick,dashed] (4) -- (5) -- (6);
\draw[color=blue,thick,dashed] (1) -- (5)--(2);
\node[rectangle,draw=none] at (0,-1.5) {$\mathsf{G_{145}}$};
\end{tikzpicture}
\hfil
\begin{tikzpicture}
\foreach \i in {1,4,6} {
    \pgfmathsetmacro{\angle}{60 * (\i -2)}
    \node[fill=blue,label={\angle:$\i$}] (\i) at (\angle:1) {};
 }
\foreach \i in {2,3} {
    \pgfmathsetmacro{\angle}{60 * (\i -2)}
    \node[label={\angle:$\i$}] (\i) at (\angle:1) {};
 }
\node[label={180:$5$},fill=black] (5) at (180:1) {};
\draw (6) -- (1) -- (2) -- (3) -- (4);
\draw (2) -- (6);
\draw[color=blue,thick,dashed] (4) -- (5) -- (6);
\draw[color=blue,thick,dashed] (1) -- (5);
\node[rectangle,draw=none] at (0,-1.5) {$\mathsf{G_{153}}$};
\end{tikzpicture}
\caption{Blue vertices of $\mathsf{G_{30}}$ and $\mathsf{G_{36}}$ present their zero forcing covers. Adding the black vertex
with the label $5$ we obtain graphs $\mathsf{G_{129}} = \mathsf{G_{30}}\cup K_1 + \widehat{\beta}_{30}$ and $\mathsf{G_{153}} = \mathsf{G_{36}}\cup K_1 + \widehat{\beta}_{36}$. Note that $\mathsf{G_{145}}$ is a supergraph of $\mathsf{G_{129}}$.
}
\label{fig:g129}
\end{figure}

\begin{example}\label{ex:g171-g187}
By \cite[Fig.~1]{MR4074182} the ordered multiplicity lists $(1,2,2)$, $(2,2,1)$, $(1,1,2,1)$ and $(1,2,1,1)$ are realizable with the SSP and spectrally arbitrary for $C_5$. Since any four vertices of $C_5$ are a zero forcing cover of $C_5$, using Corollary~\ref{cor:zf} 
it follows that ordered multiplicity lists $(1,2,3)$, $(1,3,2)$, $(3,2,1)$, $(3,1,2)$, $(1,1,3,1)$ and $(1,3,1,1)$ are spectrally arbitrary for $\mathsf{G_{171}}$ with the SSP, see Figure~\ref{fig:171,175}. Moreover, since $\mathsf{G_{187}}$ is a supergraph of $\mathsf{G_{171}}$, the same ordered multiplicity lists are spectrally arbitrary with the SSP for $\mathsf{G_{187}}$ as well. And so we have completely resolved the question of spectral arbitrariness for the remaining ordered multiplicity lists of $\mathsf{G_{171}}$ and $\mathsf{G_{187}}$ listed in \cite[Appendix~B]{MR4284782}.
\end{example}

\begin{figure}[h]
\centering
\begin{tikzpicture}
\foreach \i in {1,3,4,5} {
    \pgfmathsetmacro{\angle}{60 * (\i -1)}
    \node[fill=blue,label={\angle:$\i$}] (\i) at (\angle:1) {};
 }
\node[label=2] (2) at (60:1) {};
\node[fill=black,label={300:$6$}] (6) at (300:1) {};
\draw (1) -- (2) -- (3) -- (4) -- (5) -- (1);
\draw[color=blue,thick,dashed] (1) -- (6) -- (3);
\draw[color=blue,thick,dashed] (4) -- (6)--(5);
\node[rectangle,draw=none] at (0,-1.5) {$\mathsf{G_{171}}$};
\end{tikzpicture}
\hfil
\begin{tikzpicture}
\foreach \i in {1,3,5} {
    \pgfmathsetmacro{\angle}{60 * (\i -1)}
    \node[fill=blue,label={\angle:$\i$}] (\i) at (\angle:1) {};
 }
\foreach \i in {2,4} {
    \pgfmathsetmacro{\angle}{60 * (\i -1)}
    \node[fill=black,label={\angle:$\i$}] (\i) at (\angle:1) {};
 }
\node[label={300:$6$}] (6) at (300:1) {};
\draw (1) -- (6) -- (3);
\draw (5) -- (6);
\draw[color=blue,thick,dashed] (5) -- (4) -- (1);
\draw[color=blue,thick,dashed] (5) -- (2) -- (1);
\draw[color=blue,thick,dashed] (4) -- (3) -- (2);
\node[rectangle,draw=none] at (0,-1.5) {$\mathsf{G_{175}}$};
\end{tikzpicture} \hfil
\begin{tikzpicture}
\foreach \i in {1,2,3,4,5} {
    \pgfmathsetmacro{\angle}{60 * (\i -1)}
    \node[fill=blue,label={\angle:$\i$}] (\i) at (\angle:1) {};
 }
\node[fill=black,label={300:$6$}] (6) at (300:1) {};
\draw (1) -- (2) -- (3) -- (4)--(5)--(1);
\draw[color=blue,thick,dashed] (1) -- (6) -- (3);
\draw[color=blue,thick,dashed] (4) -- (6)--(5);
\draw[color=blue,thick,dashed] (6)--(2);
\node[rectangle,draw=none] at (0,-1.5) {$\mathsf{G_{187}}$};
\end{tikzpicture}
\caption{Blue vertices of $C_5$ and $K_{1,3}$ present their zero forcing covers. Adding black $K_1$ and $2 K_1$ to $C_5$ and $K_{1,3}$, respectively, we obtain graphs $\mathsf{G_{171}}$, $\mathsf{G_{175}}$   $\mathsf{G_{187}}$.}
\label{fig:171,175}
\end{figure}

While we were able to use the classical zero forcing on the Cartesian product of graphs to build the SSP liberation set, we need to define a new color change rule for the SAP.  Let $G$ and $H$ be graphs, and let each vertex of $G\cart H$ be colored blue or white.  We say $u\xrightarrow{G} v$ if $u$ and $v$ are in the same copy of $G$ and by only looking at this induced subgraph isomorphic to $G$ the action $u\rightarrow v$ is allowed by the color change rule.  The notion of $u\xrightarrow{H} v$ is defined similarly. If one may start by coloring a set of vertices $F$ blue and repeatedly apply $u\xrightarrow{G} v$ or $u\xrightarrow{H} v$ to make $V(G\cart H)$ blue, then $F$ is called a \emph{local zero forcing set} of $G\cart H$.  

\begin{definition}
\label{def:lzfcover}
Let $G$ be a graph.  A set $\beta \subseteq V(G)$ is a \emph{local zero forcing cover} of $G$ if $\beta'$ is a local zero forcing set of $G$ for any $\beta'\subset\beta$ with $|\beta'| = |\beta| - 1$.
\end{definition}

By modifying the proof of Theorem~\ref{thm:zf}, we obtain the analogous result for the SAP.  

\begin{theorem}
\label{thm:zfsap}
Let $G$ and $H$ be graphs.  Suppose $A\in\mptn(G)$ and $B\in\mptn(G)$ have the SAP.  If $\beta$ is a local zero forcing cover of $G\cart H$, then $\beta$ is an SAP liberation set of $A\oplus B$.
\end{theorem}

\begin{remark}
Note that when $H = K_1$, a set $F$ is a zero forcing cover of $G\cart H$ if and only if $F$ is a local zero forcing cover of $G\cart H$.  This is reasonable, since the SSP liberation set allows one to add an arbitrary eigenvalue $\lambda$ on top of $\spec(A)$, while the SAP liberation set allows one to increase the multiplicity of an eigenvalue of $\spec(A)$ --- they have the same effect. 
\end{remark}

\begin{example}
\label{ex:prism}
Let $G = C_4$ and $H = P_2$, and let $V(G) = [4]$ and $V(H) = \{5,6\}$.  Any set of the form $\{(i,5), (i+1,5)\}$ or $\{(i,6), (i+1,6)\}$ is a local zero forcing set for $G\cart H$, where $i+1$ is replaced by $1$ if $i = 4$.  Consequently, 
\[
F = \{(1,5), (2,5), (3,6), (4,6)\}
\]
is a local zero forcing cover for $G\cart H$.  Let $\beta$ be the edge set corresponding to $F$.  Then by Theorem~\ref{thm:zfsap}, $\beta$ is a liberation set of $A\oplus B$ for any $A\in\mptn(G)$ and $B\in\mptn(H)$ with the SAP.  By choosing $A$ with nullity $2$ and $B$ with nullity $1$, we prove the existence of a matrix in  $\mptn(G\dunion H + \beta)$ with nullity $3$ and the SAP. Note that $G\dunion H + \beta$ is the prism graph shown in Figure~\ref{fig:prism}.

\begin{figure}[h]
\centering
\begin{tikzpicture}
\node[label={left:$1$}] (1) at (0,3) {};
\node[label={left:$2$}] (2) at (1,2) {};
\node[label={left:$3$}] (3) at (1,0) {};
\node[label={left:$4$}] (4) at (0,1) {};
\node[label={right:$5$}] (5) at (2,2.8) {};
\node[label={right:$6$}] (6) at (2,0.8) {};

\draw (1) -- (2) -- (3) -- (4) -- (1);
\draw (5) -- (6);
\draw[blue,thick,dashed] (1) -- (5) -- (2);
\draw[blue,thick,dashed] (3) -- (6) -- (4);
\end{tikzpicture}
\caption{The prism graph $C_4\dunion P_2 + \beta$ for Example~\ref{ex:prism}.}
\label{fig:prism}
\end{figure}
\end{example}

\section{Liberation set of a graph}
\label{sec:libergraph}

In this section, we study the sets that are liberation sets for any matrix in $\mptn(G)$, and hence do not depend on the choice of the matrix in $\mptn(G)$. In general, it can be hard to identify a liberation set of a graph.  However, the SSP sequence introduced in \cite{MR4080669} and the SAP zero forcing introduced in \cite{MR3536955} provide combinatorial tools that can help.

\begin{definition}
Let $G$ be a graph.  
A subset $\beta\in E(\overline{G})$ is an \emph{SSP liberation set of $G$} (an \emph{SAP liberation set of $G$}, respectively)  if $\beta$ is an SSP (an SAP, respectively) liberation set of $A$ for all $A\in\mptn(G)$. 
\end{definition}

As an immediate corollary of Lemma~\ref{lem:liberation} we obtain the following result.
\begin{theorem}
\label{thm:gliberation}
Let $G$ be a graph and $\beta$ an SSP (an SAP, respectively) liberation set of $G$.  Then any spectrum (or rank, respectively) realizable in $\mptn(G)$ is realizable in $\mptn(G + \beta)$ with the SSP (the SAP, respectively).
\end{theorem}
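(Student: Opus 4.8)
The plan is to unwind the definitions and apply Lemma~\ref{lem:liberation} pointwise. Suppose a spectrum $\sigma$ is realizable in $\mptn(G)$; by definition this means there exists $A \in \mptn(G)$ with $\spec(A) = \sigma$. Since $\beta$ is an SSP liberation set of $G$, it is in particular an SSP liberation set of this specific matrix $A$. Lemma~\ref{lem:liberation} then yields a matrix $A' \in \mptn(G + \beta)$ that has the SSP and satisfies $\spec(A') = \spec(A) = \sigma$. Hence $\sigma$ is realizable in $\mptn(G + \beta)$ by a matrix with the SSP, which is exactly the claim.

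The argument for the SAP statement is identical, replacing ``spectrum'' by ``rank'' and the SSP by the SAP throughout: if a rank $r$ is realizable in $\mptn(G)$, pick $A \in \mptn(G)$ with $\rank(A) = r$, use that $\beta$ is an SAP liberation set of $A$, and invoke the SAP half of Lemma~\ref{lem:liberation} to produce $A' \in \mptn(G+\beta)$ with the SAP and $\rank(A') = r$.

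I do not expect a genuine obstacle here, since the only content beyond Lemma~\ref{lem:liberation} is the observation that ``$\beta$ is a liberation set of $G$'' is by definition the statement that $\beta$ is a liberation set of \emph{every} $A \in \mptn(G)$, so in particular of whichever realizer of the target spectrum (or rank) we happen to choose. The one point worth stating explicitly in the write-up is that the realizability of a spectrum or rank in $\mptn(G)$ is precisely an existential statement about some $A \in \mptn(G)$, so there is always such an $A$ to feed into Lemma~\ref{lem:liberation}. Everything else is a one-line chain of implications, which is why the result is billed as an immediate corollary.
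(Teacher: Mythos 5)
Your proposal is correct and is exactly the argument the paper intends: the theorem is stated as an immediate corollary of Lemma~\ref{lem:liberation}, obtained by choosing a realizer $A\in\mptn(G)$ of the given spectrum (or rank), noting that $\beta$ is by definition a liberation set of that particular $A$, and applying the lemma. No discrepancies to report.
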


\begin{example}
Consider $G$ to be  a star graph $K_{1,4}$ with $v=5$ as the only non-leaf vertex, and let
\[A = \begin{pmatrix}
 d_1 & 0 & 0 & 0 & a_{1,5} \\
 0 & d_2 & 0 & 0 & a_{2,5} \\
 0 & 0 & d_3 & 0 & a_{3,5} \\
 0 & 0 & 0 & d_4 & a_{4,5} \\
 a_{1,5} & a_{2,5} & a_{3,5} & a_{4,5} & d_5
\end{pmatrix}\in\mptn(K_{1,4}),\]
where $a_{i,j}$'s are assumed to be nonzero while $d_i$'s can be any real numbers.
The SSP verification matrix $\Psi$ of $A$ is equal to
\[
\scalemath{0.8}{
\begin{pmatrix}
d_{1} - d_{2} & 0 & 0 & -a_{2,5} & 0 & 0 & -a_{1,5} & 0 & 0 & 0 \\
0 & d_{1} - d_{3} & 0 & -a_{3,5} & 0 & 0 & 0 & 0 & -a_{1,5} & 0 \\
0 & 0 & d_{1} - d_{4} & -a_{4,5} & 0 & 0 & 0 & 0 & 0 & -a_{1,5} \\
0 & 0 & 0 & 0 & d_{2} - d_{3} & 0 & -a_{3,5} & 0 & -a_{2,5} & 0 \\
0 & 0 & 0 & 0 & 0 & d_{2} - d_{4} & -a_{4,5} & 0 & 0 & -a_{2,5} \\
0 & 0 & 0 & 0 & 0 & 0 & 0 & d_{3} - d_{4} & -a_{4,5} & -a_{3,5}
\end{pmatrix}},
\]
where the rows are indexed by the nonedges $\{1,2\}$, $\{1,3\}$, $\{1,4\}$, $\{2,3\}$, $\{2,4\}$, $\{3,4\}$.  Let $\beta' = \{\{1,2\},\{1,3\}\}$ and  $\gamma$ be the set of columns that contain no entry of the form $d_i - d_j$, $i\ne j$.  Thus, 
\[\Psi:=\Psi[E(\overline{G})\setminus\beta',\gamma] = \begin{pmatrix}
 -a_{4,5} & 0 & 0 & -a_{1,5} \\
 0 & -a_{3,5} & -a_{2,5} & 0 \\
 0 & -a_{4,5} & 0 & -a_{2,5} \\
 0 & 0 & -a_{4,5} & -a_{3,5}
\end{pmatrix}.\]
As $\det(\Psi)=-2a_{2,5}a_{3,5}a_{4,5}^2$, it follows that $\Psi$ has full row-rank for any choice of nonzero $a_{i,j}$'s. By Remark~\ref{rem:vermtx}, $A$ has the SSP with respect to $K_{1,4} + \beta'$.  By symmetry, the same argument works for any $\beta'$ of the form $\{\{i,j\},\{i,k\}\}$ with $i,j,k\in [4]$.  Therefore, both 
\[\begin{aligned} 
\beta_1 &= \{\{1,2\}, \{2,3\}, \{1,3\}\}, \text{ and}\\
\beta_2 &= \{\{1,2\}, \{1,3\}, \{1,4\}\}
\end{aligned}\]
are SSP liberation sets of $K_{1,4}$, as illustrated in Figure~\ref{fig:k14liber}.

\begin{figure}[h]
\centering
\begin{tikzpicture}
\node[label={left:$1$}] (1) at (0,2) {};
\node[label={right:$2$}] (2) at (2,2) {};
\node[label={left:$3$}] (3) at (0,0) {};
\node[label={right:$4$}] (4) at (2,0) {};
\node[label={left:$5$}] (5) at (1,1) {};
\draw (1) -- (5) -- (2);
\draw (3) -- (5) -- (4);
\draw[blue, thick,dashed] (1) -- (2);
\draw[blue, thick,dashed] (2) to[bend left] (3);
\draw[blue, thick,dashed] (3) -- (1);
\end{tikzpicture}
\hfil
\begin{tikzpicture}
\node[label={left:$1$}] (1) at (0,2) {};
\node[label={right:$2$}] (2) at (2,2) {};
\node[label={left:$3$}] (3) at (0,0) {};
\node[label={right:$4$}] (4) at (2,0) {};
\node[label={left:$5$}] (5) at (1,1) {};
\draw (1) -- (5) -- (2);
\draw (3) -- (5) -- (4);
\draw[blue, thick,dashed] (1) -- (2);
\draw[blue, thick,dashed] (1) -- (3);
\draw[blue, thick,dashed] (1) to[bend left] (4);
\end{tikzpicture}
\caption{Two SSP liberation sets of $K_{1,4}$.}
\label{fig:k14liber}
\end{figure}

As a consequence, any spectrum occurring in $\mptn(K_{1,4})$ also occurs in $\mptn(K_{1,4} + \beta_1)$ and $\mptn(K_{1,4} + \beta_2)$ with the SSP by Theorem~\ref{thm:gliberation}. On the other hand, it is known that not all $A\in\mptn(K_{1,4})$ have the SSP.  In particular, no matrix $A\in\mptn(K_{1,4})$ with an eigenvalue of multiplicity $3$ has the SSP; see, e.g., \cite{MR4074182}.  
\end{example}

The theory developed above is useful for studying various parameters that depend on the spectrum of matrices. Two examples of this are specified below.  
Recall that $M(G)$ is the maximum nullity of matrices in $\mptn(G)$, and $\xi(G)$ denotes the maximum nullity of matrices in $\mptn(G)$ with the SAP~\cite{MR2181887}. Moreover, let $q(G)$ denote the minimal number of distinct eigenvalues of matrices in $\mptn(G)$ and  $q_S(G)$ the minimal number of distinct eigenvalues of matrices in $\mptn(G)$ with SSP~\cite{MR3665573}. 
By Theorem~\ref{thm:gliberation} we get the following inequalities.

\begin{corollary}\label{cor:liberation-set-graph}
Let $G$ be a graph.  
\begin{enumerate}
    \item If $\beta$ an SSP liberation set of $G$, then $q(G + \beta)\leq q_S(G + \beta) \leq q(G)\leq q_S(G)$.
    \item If $\beta$ an SAP liberation set, then $M(G + \beta)\geq \xi(G + \beta)\geq M(G) \geq \xi(G)$.
\end{enumerate}
\end{corollary}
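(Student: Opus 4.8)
The plan is to read off both inequality chains directly from Theorem~\ref{thm:gliberation}, using nothing more than the elementary fact that requiring a matrix to have the SSP (resp.\ the SAP) only shrinks the family $\mptn(\cdot)$, so any extremal parameter taken over the SSP/SAP subfamily is bounded in the expected direction by the same parameter over the full family.

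For the first chain I would argue in three pieces. The outer inequalities $q(G+\beta)\le q_S(G+\beta)$ and $q(G)\le q_S(G)$ are immediate: the set of matrices in $\mptn(\cdot)$ with the SSP is a subset of $\mptn(\cdot)$, and a minimum taken over a subset can only be larger. For the middle inequality $q_S(G+\beta)\le q(G)$, choose $A\in\mptn(G)$ with exactly $q(G)$ distinct eigenvalues. Then $\spec(A)$ is a spectrum realizable in $\mptn(G)$, so since $\beta$ is an SSP liberation set of $G$, Theorem~\ref{thm:gliberation} produces a matrix in $\mptn(G+\beta)$ with the SSP and the same spectrum, hence with exactly $q(G)$ distinct eigenvalues; this forces $q_S(G+\beta)\le q(G)$.

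The second chain is handled in exactly the same way, replacing ``spectrum'' by ``rank'' and ``SSP'' by ``SAP''. Here one first records that $G$ and $G+\beta$ have the same vertex set, say $|V(G)|=|V(G+\beta)|=n$, so that nullities of matrices in $\mptn(G)$ and $\mptn(G+\beta)$ are compared against the same $n$. Again $M(G+\beta)\ge \xi(G+\beta)$ and $M(G)\ge \xi(G)$ hold because the SAP subfamily is contained in the full family and a maximum over a subset is smaller. For $\xi(G+\beta)\ge M(G)$, take $A\in\mptn(G)$ with $\operatorname{null}(A)=M(G)$, i.e.\ $\rank(A)=n-M(G)$; this rank is realizable in $\mptn(G)$, so by Theorem~\ref{thm:gliberation} there is $A'\in\mptn(G+\beta)$ with the SAP and $\rank(A')=\rank(A)$, whence $\xi(G+\beta)\ge \operatorname{null}(A')=n-\rank(A')=M(G)$.

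I expect essentially no obstacle here: all the real content sits in Lemma~\ref{lem:liberation} and its graph-level consequence Theorem~\ref{thm:gliberation}, which I would invoke as a black box. The only point requiring a moment's care is bookkeeping --- making sure the two families $\mptn(G)$ and $\mptn(G+\beta)$ are compared over a common vertex set so that ``number of distinct eigenvalues'' and ``nullity'' transport correctly, and noting that Theorem~\ref{thm:gliberation} preserves the rank exactly (not merely ``some rank'') so that the nullity is matched and not just bounded.
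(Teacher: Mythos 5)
Your proposal is correct and matches the paper's (implicit) argument: the corollary is presented as an immediate consequence of Theorem~\ref{thm:gliberation}, with the outer inequalities coming from the trivial containment of the SSP/SAP subfamilies in $\mptn(\cdot)$ and the middle inequalities from transporting an optimal spectrum (resp.\ rank, hence nullity over the common vertex set) from $\mptn(G)$ to $\mptn(G+\beta)$. No gaps; your bookkeeping remarks are exactly the right points of care.
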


\section{Conclusion}
\label{sec:conclusion}

The introduction of the strong spectral properties has made a powerful impact on the IEP-$G$ and related problems. The notion of the SSP liberation set, introduced in this paper, further advances the approach to the problem through an investigation of what perturbations of the pattern of a given matrix guarantee the preservation of spectra or rank. While the paper offers a selection of examples where the SSP liberation set is applied, it does not aim to provide an exhaustive list of possible research directions that could benefit from it. A study of the direct sums of matrices and disjoint unions of graphs is a natural first step, especially since using the standard spectral properties in this context is fully understood. Even in this special case, the paper provides a foundation for future research supported with only initial examples that can be developed further. In particular, the paper introduces and motivates a purely combinatorial problem of determining the zero forcing covers of Cartesian products of graphs.

\section*{Acknowledgements}

Jephian C.-H. Lin was supported by the Young
Scholar Fellowship Program (grant no.~NSTC-111-2628-M-110-002) from the National Science and Technology Council of Taiwan.  
Polona Oblak received funding from Slovenian Research Agency (research core funding no.~P1-0222 and project no.~J1-3004).

\bibliographystyle{plain}
\bibliography{references}

\end{document}